\numberwithin{equation}{section}
\newtheorem{theorem}[equation]{Theorem}
\newtheorem{lemma}[equation]{Lemma}
\theoremstyle{definition}
\newtheorem{definition}[equation]{Definition}
\DeclareMathOperator{\Char}{Char}
\DeclareMathOperator{\Diff}{Diff}
\DeclareMathOperator{\Levi}{Levi}
\DeclareMathOperator{\sym}{ \sigma\!\!\!\sigma}
\DeclareMathOperator{\Res}{Res}
\DeclareMathOperator{\Span}{span}
\DeclareMathOperator{\spec}{spec}
\DeclareMathOperator{\Tr}{Tr}
\DeclareMathOperator{\WF}{WF}
\def\A{\mathcal A}
\def\B{\mathcal B}
\def\D{\mathcal D}
\def\Dom{\mathscr D}
\def\Dbar{\overline\D}
\def\Hor{\mathcal H}
\def\K{\mathcal K}
\def\Kbar{\overline \K}
\def\Lie{\mathcal {L}}
\def\M{\mathcal M}
\def\N{\mathcal N}
\def\Orb{\mathcal O}
\def\T{\mathcal T}
\def\Vee{\mathcal V}
\def\Veebar{\overline\Vee}
\def\X{\mathcal X}
\def\Y{\mathcal Y}
\def\Ha{\mathscr H}
\def\Sch{\mathscr S}
\def\C{\mathbb C}
\def\Dee{\mathbb D}
\def\Deebar{\overline\Dee}
\def\R{\mathbb R}
\def\Z{\mathbb Z}
\def\a{\mathfrak a}
\def\m{\mathfrak m}
\def\p{\mathfrak p}
\def\ss{\mathfrak s}
\def\im{i}
\def\Wedge{\raise2ex\hbox{$\mathchar"0356$}}
\def\inner{\mathbf i}
\def\Id{I}
\def\minus{\backslash}
\def\ie{i.e.}
\def\dee{\partial}
\def\deebar{\overline \partial}
\def\deebarb{\overline \partial_b}
\def\Laplacian{\square}
\def\embed{\hookrightarrow}
\def\display#1#2{\mbox{\parbox{#1} {#2}}}
\def\set#1{\{#1\}}
\begin{document}

\title{Hypoellipticity and vanishing theorems}
\author{Gerardo A. Mendoza}
\email{gmendoza@math.temple.edu}
\address{Department of Mathematics\\
Temple University\\
Philadelphia, PA 19122}
\begin{abstract}
Let $-\im\Lie_\T$ (essentially Lie derivative with respect to $\T$, a smooth nowhere zero real vector field) and $P$ be commuting differential operators, respectively of orders $1$ and $m\geq 1$, the latter formally normal, both acting on sections of a vector bundle over a closed manifold. It is shown that if $P+(-i\Lie_\T)^m$ is elliptic then the restriction of $-\im\Lie_\T$ to $\Dom\subset \ker P\subset L^2$ yields a selfadjoint  operator $-\im\Lie_\T|_\Dom:\Dom\subset\ker P\to \ker P$ with compact resolvent ($\Dom$ is specified carefully). It is also shown that, in the presence of an additional hypothesis on microlocal hypoellipticity of $P$, $-\im\Lie_\T|_\Dom$ is semi-bounded. These results are applied to CR manifolds on which $\T$ acts as an infinitesimal CR transformation which are then shown to yield versions of Kodaira's vanishing theorem.
\end{abstract}

\keywords{Spectral theory, hypoellipticity, CR manifolds, cohomology, vanishing theorems}
\subjclass[2010]{Primary 58C40, 32L20; Secondary 32V05, 58J10}

\dedicatory{To Mar\'ia Silvia}
\maketitle

\section{Introduction}

The main results in this paper were motivated by an investigation into properties of complex $b$-structures. The latter, introduced in \cite{Me3}, are complex structures in the $b$-category (see Melrose \cite{RBM2}) on manifolds $\M$ with boundary. Complex $b$-structures happen to determine a very rich structure on the boundary of $\M$ bearing much similarity with the structure of a circle bundle of a holomorphic line bundle over a complex manifold (the last section here goes into this in much detail). While they are not CR structures, they do contain families of these in the same way that a circle bundle of a holomorphic line bundle admits a family of CR structures parametrized by Hermitian structures through the Hermitian holomorphic connection. Various aspects of these structures on the boundary were investigated in depth in a series of papers \cite{Me4,Me6,Me7,Me8} going further into properties motivated by those of circle bundle. This paper represents another investigation along those lines, this time in the form of theorems about vanishing of cohomology. We will not discuss here complex $b$-structures, but refer the interested reader to any of the papers already cited. 

\medskip
Throughout this paper, $\N$ will denote a $C^\infty$ compact manifold without boundary, $\T$ a smooth nowhere vanishing real vector field, and $E\to\N$ a complex Hermitian vector bundle. Let $\Lie_\T$ be a first order differential operator acting on sections of $E$ related to $\T$ by the property
\begin{equation}\label{LieAsCovariantDerivative}
\Lie_\T(f\phi)=f\Lie_\T\phi+\T\! f\,\phi,\quad f\in C^\infty(\N),\ \phi\in C^\infty(\N;E),
\end{equation}
such that $-\im \Lie_\T$ is symmetric (the $L^2$ inner product is defined with the aid of the Hermitian form of $E$ and a $\T$-invariant smooth positive density). Suppose $P$ is a differential operator that commutes with its formal adjoint and with $\Lie_\T$. We show in Section~\ref{sInvariantOperators}, see Theorem~\ref{TIsFredholm}, that if $P+(-\im\Lie_\T)^m$ is elliptic, then $-\im\Lie_\T$, acting on a subspace $\Dom$ of the kernel of $P$ in $L^2$, is selfadjoint with compact resolvent. The domain for $-\im\Lie_\T$ that makes the statement precise will be specified in \eqref{TheDomain}.

While the operator $-\im\Lie_\T\big|_\Dom$ acting on $\Dom$ is not, strictly speaking, a differential (or pseudodifferential) operator any longer, it inherits many properties from these, enough that one can prove a rough estimate on the counting function of its eigenvalues, see \eqref{RoughWeyl}; this can be done without hypothesis beyond those already stated in Theorem~\ref{TIsFredholm}. However, assuming in addition positivity of $A=P+(-\im\Lie_\T)^m$ one can give a specific upper bound for the counting function using Weyl's estimate for $A$; this is the content of Theorem~\ref{WeylBound}. The relevancy of this lies in its implication on the growth of the dimension of the spaces of holomorphic sections of an ample line bundle which we do not discuss here. These results, together with a rough outline of the proof of Weyl's estimate is presented at length in Section~\ref{sWeyl}. 

Semi-boundedness of $-\Lie_\T\big|_\Dom$ in the presence of hypoellipticity conditions on $P$ is discussed in Section~\ref{sSpectrumHypo}. The ellipticity of $P+(-\im\Lie_\T)^m$ implies that the characteristic set $\Char(P)$ of $P$ lies in the complement of the set where the principal symbol, $\pmb \tau$ (a scalar function), of $-\im \T$  vanishes. Consequently $\Char P$ is separated into two subsets $\Char^{\pm}(P)$, according to the sign of $\pmb\tau$. Theorem~\ref{HalfSpectrum} states that if, for instance, $P$ is hypoelliptic on $\Char^+(P)$ then $-\im\Lie_\T\big|_\Dom$ has only finitely many positive eigenvalues. This is the central result concerning vanishing theorems.

The previous theorems are applied in Sections~\ref{sCRmanifolds} and \ref{sVanishing} to structures of the kind arising on the boundary of a complex $b$-manifold. In this paper we take the point of view that there is an initially given CR structure on $\N$ which is invariant under the action of the one-parameter group of diffeomorphisms generated by $\T$ and construct the aforementioned additional structure part of the way form this, enough that the analogy with line bundles (discussed in Section~\ref{sLineBundles}) becomes clear. All CR structures in the class are again $\T$-invariant. The class is analogous to the class of Hermitian holomorphic connections on a holomorphic line bundle parametrized by the Hermitian metric. Section~\ref{sCRmanifolds} ends with a restatement of Theorem~\ref{TIsFredholm}, which is now a decomposition theorem of the $L^2$-CR cohomology  according to the eigenspaces of $-\im\Lie_\T$ acting as Lie derivative on the spaces of harmonic CR forms. Throughout Sections ~\ref{sCRmanifolds} and \ref{sVanishing} we work under the assumption that there is a $\T$-invariant metric which then in particular gives $\T$-invariant Hermitian structures on all CR structures of interest.

The theorem relating hypoellipticity and semi-finiteness of the spectrum in Section~\ref{sSpectrumHypo} does not make any assumption about where the hypoellipticity comes from. In Section~\ref{sVanishing} we use known theorems (see for instance Boutet de Monvel \cite{BdM} or Sj\"ostrand \cite{Sj}) that establish microlocal hypoellipticity in the presence of the hypothesis of non-degeneracy of the CR structure to state a theorem concerning nature of the decomposition of the spaces of harmonic CR forms as eigenspaces of $-\im\Lie_\T$. Incidentally, a complete discussion of hypoellipticity of the Laplacian on CR forms can be found in the appendix of \cite{Me3} as part of a complete symbol calculus for a class of pseudodifferential operator that contains these Laplacians when the CR structure is non-degenerate.

We have included, as Section~\ref{sLineBundles}, a discussion of circle bundles of line bundles that provides a translation of known points of view to that of the present paper. This serves to give concrete examples to the theorems discussed here, in particular the relation between spectrum, eigenspaces, and cohomology, see \eqref{SpectrumAndKodairaVanishing}. 

\medskip
This paper contains the results presented by the author in the Workshop on Several Complex Variables and Complex Geometry held in the Academia Sinica, Taipei from July 9 to July 13, 2012. The author thanks the organizers for the opportunity to participate in the event.

\section{Invariant operators}\label{sInvariantOperators}

We let $\a_t$ denote the one-parameter family of diffeomorphisms determined by $\T$. Let $E\to\N$ be a vector bundle with Hermitian metric $h$ and suppose that
\begin{equation}\label{LieT}
\Lie_\T:C^\infty(\N;E)\to C^\infty(\N;E)
\end{equation}
is a differential operator such that
\begin{equation}\label{InvariantHMetric}
\T h(\phi,\psi)=h(\Lie_\T\phi,\psi)+h(\phi,\Lie_\T\psi)
\end{equation}
holds if $\phi$, $\psi\in C^\infty(\N;E)$. Such an operator must satisfy \eqref{LieAsCovariantDerivative}
for every smooth $f$ and section $\phi$ so it must be a first order differential operator. It can be viewed as the operator $\nabla_\T$ for some Hermitian connection $\nabla$. Indeed, if $\nabla'$ is an arbitrary Hermitian connection on $E$ and $\theta$ is a smooth real $1$-form such that $\langle \theta,\T\rangle = 1$, then
\begin{equation*}
\phi\mapsto \nabla\phi = \nabla'\phi + \theta\otimes \Lie_\T \phi - \theta\otimes \nabla'_\T\phi
\end{equation*}
is a Hermitian connection with the required property. So $\Lie_\T$ gives rise to a one-parameter group of isometries $\a_t^*:E\to E$ covering $\a_{-t}:\N\to\N$ by way of parallel transport along the integral curves of $\T$. Conversely, the infinitesimal generator of such a group of isometries is an operator \eqref{LieT} for which \eqref{InvariantHMetric} holds. It follows immediately from \eqref{LieAsCovariantDerivative} that $\sym(-\im \Lie_\T)=\sym(-\im\T)\Id$. 

Let $\m$ be a smooth positive density on $\N$ and define the space $L^2(\N;E)$ using the Hermitian metric of $E$ and the density $\m$; the inner product is thus
\begin{equation*}
(\phi,\psi)=\int h(\phi,\psi)\,d\m.
\end{equation*}

Let $P\in\Diff^m(\N;E)$, $m\geq 1$. By $\ker P$ we shall mean mean the kernel of $P$ in $L^2(\N;E)$; as a closed subspace of $L^2(\N;E)$, it is a Hilbert space on its own right. Define
\begin{equation}\label{TheDomain}
\Dom =\set{\phi\in \ker P:\Lie_\T\phi\in L^2(\N;E)}.
\end{equation}
If $P$ commutes with $\Lie_\T$, then
\begin{equation}\label{LieInL2}
-\im \Lie_\T\big|_\Dom:\Dom \subset \ker P\to\ker P
\end{equation}
is an unbounded closed operator.

\begin{theorem}\label{TIsFredholm}
Suppose that $P\in\Diff^m(\N;E)$ commutes with its formal adjoint and with $\Lie_\T$ and that there is a (real) line $\Lambda\subset \C$ through the origin such that
\begin{equation}\label{DoubleRayCondition}
\sym (P)+\sym(-\im\Lie_\T)^m-\lambda\Id \text{ is invertible if } \lambda\in \Lambda.
\end{equation}
Suppose further that the Hermitian metric of $E$ and the density $\m$ are $\T$-invariant. Then the operator \eqref{LieInL2} is selfadjoint with compact resolvent, in particular, Fredholm.
\end{theorem}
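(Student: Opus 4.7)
My plan is to reduce the statement to a simultaneous diagonalization of the commuting family $\{A,P,-\im\Lie_\T\}$ on $L^2(\N;E)$, where $A=P+(-\im\Lie_\T)^m$; the $\T$-invariance of $h$ and $\m$ is what actually makes $-\im\Lie_\T$ symmetric and grounds this plan. First I treat $A$ alone as a closed operator on $L^2(\N;E)$ with domain $H^m(\N;E)$. Taking $\lambda=0$ in \eqref{DoubleRayCondition} gives ellipticity of $A$ of order $m$, while the remaining values on $\Lambda$ give parameter-ellipticity of $A-\lambda\Id$; standard theory on a closed manifold then furnishes $\lambda_0\in\Lambda$ with $A-\lambda_0\Id$ invertible with compact inverse, so its $L^2$-closure $\bar A$ has compact resolvent. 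Formal normality of $A$ follows from $[P,P^*]=0$ (given), $[P,\Lie_\T]=0$ (given), and, taking formal adjoints together with $\Lie_\T^*=-\Lie_\T$, also $[P^*,\Lie_\T]=0$; hence $[A,A^*]=0$ on smooth sections. Since both $A$ and $A^*$ are elliptic of order $m$, elliptic regularity shows that $\bar A\bar A^*$ and $\bar A^*\bar A$ are both defined precisely on $H^{2m}(\N;E)$, and the identity extends by density, so $\bar A$ is a normal operator with compact resolvent.

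I then apply the spectral theorem to obtain an orthonormal basis $\{\phi_k\}$ of $L^2(\N;E)$ with $\bar A\phi_k=\lambda_k\phi_k$, $|\lambda_k|\to\infty$, each $\phi_k$ smooth by elliptic regularity. The finite-dimensional eigenspaces $V_\lambda=\ker(\bar A-\lambda\Id)$ are invariant under $P$ and $-\im\Lie_\T$ (both commute with $A$); on them $-\im\Lie_\T$ is self-adjoint and $P$ is normal (inherited from the global properties), and they commute, so as commuting normal operators on a finite-dimensional space they are simultaneously diagonalizable. I can therefore refine the basis so that $P\phi_k=p_k\phi_k$ and $-\im\Lie_\T\phi_k=\mu_k\phi_k$ with $\mu_k\in\R$ and $p_k+\mu_k^m=\lambda_k$. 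Testing $P\phi=0$ against each $\phi_k$ via $\langle\phi,P^*\phi_k\rangle=\overline{p_k}\langle\phi,\phi_k\rangle$ shows that $\ker P\subset L^2(\N;E)$ is the closed span of $\{\phi_k:p_k=0\}$, on which $|\mu_k|^m=|\lambda_k|\to\infty$.

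For $\phi=\sum c_k\phi_k\in\ker P$ (sum restricted to indices with $p_k=0$), the symmetry of $L=-\im\Lie_\T$ applied against the smooth eigenvectors yields $\langle L\phi,\phi_k\rangle=\langle\phi,L\phi_k\rangle=\mu_k c_k$, so $L\phi\in L^2$ precisely when $\sum|\mu_k|^2|c_k|^2<\infty$; thus $-\im\Lie_\T|_\Dom$ is unitarily equivalent to multiplication by the real sequence $(\mu_k)$ on $\ell^2$ with $|\mu_k|\to\infty$, which is manifestly self-adjoint with compact resolvent. The main technical obstacle is precisely this last step: identifying $\Dom$ from \eqref{TheDomain} with the $\ell^2$-sequence domain coming from the $A$-eigenexpansion requires both that $\ker P\subset L^2$ really coincides with the closed span of the $A$-eigenvectors having $p_k=0$, and that the distributional action of $L$ on such $\phi$ can be read off coefficient-by-coefficient through the symmetry pairing against the smooth $\phi_k$. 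Everything else is routine spectral theory once normality and compact resolvent of $\bar A$ are in place.
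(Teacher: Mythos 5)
Your proof is correct, and it takes a genuinely different route from the paper's. The paper never constructs the full spectral decomposition of $A=P+(-\im\Lie_\T)^m$: instead it builds an explicit candidate inverse $\hat S_\lambda=\Pi S_\lambda\iota$ out of the parametrix/resolvent $Q_\lambda=(A-\lambda)^{-1}$ and the telescoping factorization $\sum_{j=0}^{m-1}\lambda^{m-j-1}z^j\,(z-\lambda)=z^m-\lambda^m$, verifies that $\hat S_\lambda$ is compact (order $-1$) and inverts $-\im\Lie_\T\big|_\Dom-\lambda$, proves density of $\Dom$ by a separate duality argument using $Q_\lambda^*$, and then concludes selfadjointness from vanishing deficiency indices because the resolvent set meets both half-planes. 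The paper uses the hypothesis $[P,P^*]=0$ only quietly, to guarantee that $Q_\lambda^*$ preserves $\ker P$. You, by contrast, exploit that hypothesis much harder: you promote $A$ to a genuinely normal closed operator with compact resolvent, invoke the spectral theorem for such operators, and simultaneously diagonalize the commuting normal family $\{A,P,-\im\Lie_\T\}$ on each finite-dimensional $A$-eigenspace. This makes everything downstream transparent at once — $\ker P$ is the closed span of the eigenvectors with $p_k=0$, $\Dom$ is the natural $\ell^2$-weighted sequence domain, $-\im\Lie_\T\big|_\Dom$ is unitarily conjugate to multiplication by a real sequence $(\mu_k)$ with $|\mu_k|\to\infty$ — at the cost of relying on the spectral theorem for unbounded normal operators. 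The paper's argument stays closer to elementary symmetric-operator theory plus pseudodifferential technology, and also has the side-benefit of proving $\Dom\subset H^1(\N;E)$ along the way. You correctly flag the delicate step: since $\Lie_\T\phi$ is a priori only a distribution for $\phi\in\ker P\subset L^2$, the identification of its Fourier coefficients as $\mu_k c_k$ hinges on testing against the smooth $\phi_k$ through the symmetry identity $(-\im\Lie_\T\phi,\phi_k)=(\phi,-\im\Lie_\T\phi_k)$; this is sound, though you should also spell out the easy converse inclusion showing that the closed span of $\{\phi_k:p_k=0\}$ lies in $\ker P$.
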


If $P$ is symmetric then the principal symbol of $P+(-\im\Lie_\T)^m$ is selfadjoint. So  \eqref{DoubleRayCondition} holds if this opera elliptic for any line $\Lambda$ different from the real axis. since in this case the principal symbol of $P+(-\im\Lie_\T)^m$ is selfadjoint.

\medskip
Theorem~\ref{TIsFredholm} is a general version of Theorem 7.5 in \cite{Me7}. The following proof is adapted from that paper.

\begin{proof}
First we note that $-\im \Lie_\T$ is symmetric on $H^1(\N;E)$, the $L^2$-based Sobolev space of order $1$. Indeed, if $\phi$, $\psi\in C^\infty(\N;E)$, then \eqref{InvariantHMetric}
gives
\begin{equation*}
\int \T h(\phi,\psi)\,\m = (\Lie_\T\phi,\psi)+(\phi,\Lie_\T\psi).
\end{equation*}
On the other hand, it follows from the $\T$-invariance of $\m$ that if $u$ is a smooth function, then $(\T u)\, \m=d(u\,\inner_\T\m)$ where $\inner_\T$ is interior multiplication. So the integral on the left vanishes and we get that the formal adjoint of $\Lie_\T$ is $-\Lie_\T$. Using that $C^\infty(\N;E)$ is dense in $H^1(\N;E)$ we get
\begin{equation*}
(-\im \Lie_\T\phi,\psi)=(\phi,-\im \Lie_\T\psi),\quad \phi,\ \psi\in H^1(\N;E).
\end{equation*}

We now show that $\Dom\subset H^1(\N;E)$. Since $0\in \Lambda$, \eqref{DoubleRayCondition} implies that
\begin{equation*}
A=P+(-\im\Lie_\T)^m
\end{equation*}
is elliptic. Let $Q$ be a parametrix for $A$, so that
\begin{equation*}
QA=\Id-R
\end{equation*}
where $R$ is a smoothing operator. If $\phi\in \ker P$, then
\begin{equation*}
\phi=Q(-\im\Lie_\T)^m \phi + R\phi.
\end{equation*}
Suppose $\phi\in\Dom$. Since $\Lie_\T\phi\in L^2(\N;E)$ and $S=Q(-\im\Lie_\T)^{m-1}$ is a classical pseudodifferential operator of order $-1$, $S\Lie_\T \phi\in H^1(\N;E)$, and since $R\phi\in C^\infty(\N;E)$, $\phi\in H^1(\N;E)$. Consequently \eqref{LieInL2} is a symmetric operator.

Let $\pmb \tau=\sym(-\im \T)$, so that $\sym(-\im \Lie_\T)=\pmb \tau\Id$ as we already noted. If $\Lambda$ is the real axis, then setting $\lambda=\pmb \tau(\nu)^m$ in \eqref{DoubleRayCondition} gives that $\sym(P)(\nu)$ itself is invertible at any $\nu\in T^*\N\minus 0$, \ie, $P$ is elliptic. Then $\ker P$ is finite-dimensional and consists of smooth sections, so $\Dom=\ker P$, and $-\im\Lie_\T$ is selfadjoint. So assume that $\Lambda$ is not the real axis.

As is well known (Seeley \cite{Seeley}), \eqref{DoubleRayCondition} implies that $(A-\lambda):H^m(\N;E)\to L^2(\N;E)$ is invertible for each $\lambda\in \Lambda$ with sufficiently large $|\lambda|$. For such $\lambda$, the inverse, $Q_\lambda$, is a pseudodifferential operator of order $-m$. It commutes with $\Lie_\T$ and $P$ since $\Lie_\T$ commutes with $A$. The formula
\begin{equation*}
\big((-\im\Lie_\T)^m-\lambda\big)Q_\lambda=I- P Q_\lambda,
\end{equation*}
valid on $L^2(\N;E)$, gives
\begin{equation}\label{RInverse}
\big((-\im\Lie_\T)^m-\lambda\big)Q_\lambda\phi =\phi \quad \text{if }\phi \in \ker P,
\end{equation}
whereas the formula
\begin{equation*}
Q_\lambda\big(P+(-\im\Lie_\T)^m-\lambda\big)=I,
\end{equation*}
valid on $H^1(\N;E)$, gives
\begin{equation}\label{LInverse}
Q_\lambda((-\im\Lie_\T)^m-\lambda)\phi =\phi \quad \text{if }\phi \in \Dom.
\end{equation}
Let
\begin{equation*}
S_\lambda=Q_\lambda\sum_{j=0}^{m-1}\lambda^{m-j-1}(-\im\Lie_\T)^j,
\end{equation*}
a pseudodifferential operator of order $-1$, hence compact. Its restriction to $\ker P$ has range in $\ker P$ (because $P$ commutes with $\Lie_\T$ and $Q_\lambda$), hence in $\Dom$.
Let
\begin{equation*}
\Pi:L^2(\N;E)\to L^2(\N;E),\quad \iota:\ker P\to L^2(\N;E)
\end{equation*}
be respectively, the orthogonal projection on $\ker P$ and the inclusion map. Then $\hat S_\lambda=\Pi S_\lambda\iota:\ker P\to\ker P$ is compact. The formulas \eqref{RInverse}, \eqref{LInverse} give that $\hat S_\lambda$ is the inverse of
\begin{equation*}
(-\im \Lie_\T-\lambda)\big|_\Dom :\Dom\subset \ker P \to \ker P
\end{equation*}
for each $\lambda\in \Lambda$ with sufficiently large modulus.

We now show that $\Dom$ is dense in $\ker P$. Let $\psi\in \ker P$ be orthogonal to $\ker P$. If $\phi\in \ker P$ then $\hat S_\lambda\phi\in \Dom$, so $0=(\hat S_\lambda \phi,\psi) = (\phi,\hat S_\lambda^*\psi)$, and therefore $\psi\in \ker \hat S_\lambda^*$. Since $S_\lambda$ is continuous, $\hat S_\lambda^*=\pi S_\lambda \iota$. Since $P$ commutes with its formal adjoint and with $\Lie_\T$, so does $A$. This implies that $Q_\lambda^*$ commutes with $A$ and $\Lie_\T$, hence with $P$. Thus $Q_\lambda^*$ maps $\ker P$ to itself, and so does $S_\lambda^*$. Therefore $\hat S_\lambda \psi=0$ is equivalent to $S_\lambda^*\psi=0$. Since $(-\im\Lie_\T -\overline \lambda)S_\lambda\psi=Q_\lambda^*\psi$ and since $Q_\lambda^*$ is injective, $\psi=0$.

It follows that the operator \eqref{LieInL2} is densely defined, and since it is symmetric with resolvent set containing points in both components of $\C\minus \R$ (that is, its deficiency indices vanish), it is selfadjoint. Finally, since $\hat S_\lambda$ is compact, \eqref{LieInL2} is Fredholm.
\end{proof}

\section{Weyl estimates}\label{sWeyl}

Suppose that the conditions of Theorem~\ref{TIsFredholm} are satisfied and let $\spec_0(-\im\Lie_\T)$ denote the spectrum of the selfadjoint operator \eqref{LieInL2}. This is a discrete subset of $\R$ without finite points of accumulation. The eigenspaces, 
\begin{equation*}
\mathcal E_\tau=\set{\phi\in C^\infty(\N;E):P\phi=0,\ \Lie_\T\phi=\im\tau\phi},
\end{equation*}
are finite-dimensional and consist of smooth sections of $E$ because $P+(-\im\Lie_\T)^m$ is elliptic. We discuss here estimates for
\begin{equation*}
N(\tau)=\sum_{\substack{\tau_0\in\spec_0{(-\im\Lie_\T)}\\|\tau_0|<\tau}}\dim\mathcal E_{\tau'}
\end{equation*}
This is not quite the same as Weyl estimates for differential (or pseudodifferential) operators because \eqref{LieInL2} is not quite a differential operator. 

A rough estimate of the form 
\begin{equation}\label{RoughWeyl}
N(\tau) \leq C\tau^\mu
\end{equation}
for some positive numbers $C$ and $\mu$ can be obtained by the argument in the proof in Gilkey \cite[Lemma 1.6.3, part (c)]{Gilk84} (the proof of Lesch \cite[Proposition 1.4.7]{Lesch1997} is perhaps more explicit). The argument, which we shall omit while referring the reader to the just mentioned works, requires pointwise estimate of the elements of an orthonormal basis consisting of eigenvectors of \eqref{LieInL2} along the lines of the following result:
\begin{lemma}
Let $\{\phi_j\}_{j\in J}$ be an orthonormal basis of $\ker P$ consisting of eigenvectors of $-\im \Lie_\T$, $\phi_j\in \mathcal E_{\tau_j}$. Then there are positive constants $C$ and $\mu$ such that
\begin{equation}\label{PolynomialPointEigenBounds}
|\phi_j(p)|_{E_p}\leq C(1+|\tau_j|)^\mu\quad \text{ for all }p\in \N,\ j\in J.
\end{equation}
If $\psi \in C^\infty(\N;E^*\otimes |\Wedge|\N)$, then for each positive integer $N$ there is $C_N$ (depending on $\psi$) such that
\begin{equation}\label{BoundOnCoefficients}
|\langle \phi_j,\psi\rangle|\leq C_N(1+|\tau_j|)^{-N}\quad\text{ for all }j.
\end{equation}
\end{lemma}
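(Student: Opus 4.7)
The plan is to use elliptic regularity together with the formal self-adjointness of $-\im\Lie_\T$ already exploited in the proof of Theorem~\ref{TIsFredholm}. The key observation is that each $\phi_j$ is smooth and satisfies $A\phi_j=\tau_j^m\phi_j$, where $A=P+(-\im\Lie_\T)^m$ is elliptic of order $m$; more generally, for each positive integer $k$ the operator $A^k$ is elliptic of order $mk$ and $A^k\phi_j=\tau_j^{mk}\phi_j$.

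For \eqref{PolynomialPointEigenBounds}, fix an integer $k$ with $mk>\dim\N/2$. Standard elliptic regularity applied to $A^k$ produces a constant $C_k$, independent of $j$, such that
\begin{equation*}
\|\phi_j\|_{H^{mk}}\leq C_k\bigl(\|A^k\phi_j\|_{L^2}+\|\phi_j\|_{L^2}\bigr)=C_k\bigl(|\tau_j|^{mk}+1\bigr),
\end{equation*}
where $\|\phi_j\|_{L^2}=1$ was used. The Sobolev embedding $H^{mk}(\N;E)\embed C^0(\N;E)$ then yields \eqref{PolynomialPointEigenBounds} with $\mu=mk$ and a suitable $C$.

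For \eqref{BoundOnCoefficients} I would use the Hermitian metric of $E$ and the density $\m$ to identify the given $\psi\in C^\infty(\N;E^*\otimes|\Wedge|\N)$ with a unique $\psi_0\in C^\infty(\N;E)$, so that $\langle\phi_j,\psi\rangle=(\phi_j,\psi_0)$. Since both factors are smooth and $-\im\Lie_\T$ is formally self-adjoint on $C^\infty(\N;E)$, iterating gives
\begin{equation*}
\tau_j^N(\phi_j,\psi_0)=\bigl((-\im\Lie_\T)^N\phi_j,\psi_0\bigr)=\bigl(\phi_j,(-\im\Lie_\T)^N\psi_0\bigr),
\end{equation*}
whence Cauchy-Schwarz together with $\|\phi_j\|_{L^2}=1$ yields $|\tau_j|^N|(\phi_j,\psi_0)|\leq\|(-\im\Lie_\T)^N\psi_0\|_{L^2}$. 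Combining this with the trivial estimate $|(\phi_j,\psi_0)|\leq\|\psi_0\|_{L^2}$ to handle the region $|\tau_j|\leq 1$ produces \eqref{BoundOnCoefficients}, with $C_N$ depending on $\psi$ through $\psi_0$ and its first $N$ iterated $\Lie_\T$-derivatives.

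I do not anticipate a serious obstacle. The one point worth emphasizing is the polynomial dependence on $|\tau_j|$: working through powers of the full elliptic operator $A$, on which $\phi_j$ is an eigenvector with the clean eigenvalue $\tau_j^{mk}$, is cleaner than trying to estimate $(-\im\Lie_\T)^k\phi_j$ directly, which would force one to commute the unbounded operator $-\im\Lie_\T|_\Dom$ with $P$ on the closed subspace $\ker P$.
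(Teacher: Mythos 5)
Your argument is correct and is the natural implementation of what the paper leaves to the reader (it cites Lemma~7.9 of \cite{Me7} rather than giving a proof). Both halves rest on exactly the right mechanism: for \eqref{PolynomialPointEigenBounds}, the observation that $A^k\phi_j=\tau_j^{mk}\phi_j$ with $A^k$ elliptic of order $mk$ plus the a priori estimate and Sobolev embedding; for \eqref{BoundOnCoefficients}, repeated integration by parts via the formal self-adjointness of $-\im\Lie_\T$ (established at the start of the proof of Theorem~\ref{TIsFredholm}), with the trivial bound covering $|\tau_j|\leq 1$. Your remark that passing through powers of $A$ avoids any delicate commutation of $-\im\Lie_\T\big|_\Dom$ with $P$ is precisely the point that makes this clean.
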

The proof is a simple adaptation of that of \cite[Lemma 7.9]{Me7}.

The virtue of \eqref{RoughWeyl} lies in that it makes no assumptions on $P$ or $m$ other than the ones in Theorem~\ref{TIsFredholm}. Taking advantage of the standard Weyl estimate for positive elliptic operators the estimate can of course be made more precise:

\begin{theorem}\label{WeylBound}
In addition to the assumptions of Theorem~\ref{TIsFredholm}, suppose $A=P+(-\im\Lie_\T)^m$ is positive. Then
\begin{equation}\label{WeylFor-iLie_T}
N(\tau)\lesssim w_A|\tau|^{\dim\N}\ \text{as }\tau\to\infty,
\end{equation}
where $w_A$ is the coefficient in Weyl's eigenvalue estimate for $A$.
\end{theorem}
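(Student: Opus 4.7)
The plan is to reduce directly to the classical Weyl asymptotic for the positive elliptic operator $A = P+(-\im\Lie_\T)^m$ of order $m$. The key observation is that every eigenvector of $-\im\Lie_\T\big|_\Dom$ is automatically an eigenvector of $A$: if $\phi\in\mathcal E_{\tau_0}$, then $P\phi=0$ and $(-\im\Lie_\T)^m\phi=\tau_0^m\phi$, so $A\phi=\tau_0^m\phi$, and therefore $\mathcal E_{\tau_0}\subset \ker(A-\tau_0^m)$.

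First I would invoke the selfadjointness of $-\im\Lie_\T\big|_\Dom$ from Theorem~\ref{TIsFredholm} to note that the spaces $\mathcal E_{\tau_0}$ for distinct $\tau_0\in\spec_0(-\im\Lie_\T)$ are mutually orthogonal in $L^2(\N;E)$. Writing $V_\tau=\bigoplus_{|\tau_0|<\tau}\mathcal E_{\tau_0}$, this sum is finite-dimensional of dimension exactly $N(\tau)$; by the observation above, it sits inside the spectral subspace $W_\tau\subset L^2(\N;E)$ of the positive selfadjoint operator $A$ corresponding to eigenvalues $<\tau^m$, which has dimension $N_A(\tau^m)$, where $N_A$ is the counting function of $A$. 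Hence
\begin{equation*}
N(\tau)\leq N_A(\tau^m).
\end{equation*}

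Finally I would apply Weyl's eigenvalue estimate for the positive elliptic operator $A$ of order $m$ on the closed $n$-manifold $\N$ (with $n=\dim\N$), which gives $N_A(\lambda)\sim w_A\,\lambda^{n/m}$ as $\lambda\to\infty$. Substituting $\lambda=\tau^m$ yields $N_A(\tau^m)\sim w_A\,\tau^{n}$, and \eqref{WeylFor-iLie_T} follows at once.

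I do not anticipate a serious technical obstacle; the one subtle point is that $V_\tau$ must not overcount against $N_A(\tau^m)$. When $m$ is odd, positivity of $A$ forces $\tau_0^m\geq 0$, hence $\tau_0\geq 0$, so the map $\tau_0\mapsto\tau_0^m$ is injective on the relevant spectrum. When $m$ is even, the pair $\mathcal E_{\tau_0}$ and $\mathcal E_{-\tau_0}$ may both lie in $\ker(A-\tau_0^m)$, but they remain orthogonal (as eigenspaces of $-\im\Lie_\T\big|_\Dom$ for different eigenvalues), so $\dim\mathcal E_{\tau_0}+\dim\mathcal E_{-\tau_0}\leq \dim\ker(A-\tau_0^m)$, and the inclusion $V_\tau\subset W_\tau$ is preserved. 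With this verified, the entire argument is essentially a one-line comparison of counting functions together with the standard Weyl estimate.
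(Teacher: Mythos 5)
Your proposal is correct and is essentially the paper's own argument: the paper also observes that $\mathcal E_\tau\subset\ker(A-\tau^m\Id)$ and then compares counting functions with Weyl's asymptotic for the positive elliptic operator $A$. One small remark: the paper points out that positivity of $A$ forces $m$ to be even (and $P$ non-negative), so your case analysis for odd $m$, while harmless, is vacuous; the only case that actually arises is the even one, which you handle correctly via the orthogonality of $\mathcal E_{\tau_0}$ and $\mathcal E_{-\tau_0}$.
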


In formula \eqref{WeylFor-iLie_T}, $\lesssim$ means modulo an error of order $o(\tau^{\dim\N})$, $\tau\to\infty$. Weyl's estimate for the counting function of the eigenvalues of $A$ is
\begin{equation*}
\sum_{\lambda'<\lambda}\dim\ker(A-\lambda'\Id)\sim w_A\lambda^{\dim\N/m}\text{ as }\lambda\to\infty.
\end{equation*}
If $\tau$ is an eigenvalue of \eqref{LieInL2}, then $\tau^m$ is an eigenvalue of $A$ and $\mathcal E_\tau\subset \ker (A-\tau^m\Id)$. A simple argument now yields \eqref{WeylFor-iLie_T}. It should be noted that the hypothesis that $A$ is positive implies $m$ even and $P$ non-negative. 

Perhaps the simplest way to obtain $w_A$ is from the expansion at $t=0$ of the trace of the heat kernel of $A$ via the zeta function and the Wiener-Ikehara Tauberian Theorem \cite[Theorem XVII]{Wiener}. This is well known but we will briefly review here the less technical aspects for the sake of completeness. 

Let $e^{-tA}$ be the operator giving the solution of 
\begin{equation*}
\frac{\partial u}{\partial t}+Au=0,\ u\big|_{t=0}=u_0.
\end{equation*}
The operator $e^{-tA}$ is has smooth Schwartz kernel in $t>0$ so it is trace class for each positive $t$. If $\psi_k$ is an orthonormal basis of $L^2(\N;E)$ consisting of eigenvectors of $A$, $A\psi_k=\lambda_k$, then the solution operator is of course
\begin{equation*}
\sum_k e^{-t\lambda_k}(u_0,\psi_k)\psi_k
\end{equation*}
and so 
\begin{equation*}
\Tr e^{-tA} = \sum_{k=0}^\infty e^{-\lambda_kt}.
\end{equation*}
The zeta function of $A$ is 
\begin{equation*}
\zeta_A(s)=\sum_{k=0}^\infty \lambda_k^{-s},\quad \Re s\gg 0
\end{equation*}
which can also be written as
\begin{equation}\label{GenericZeta}
\zeta(s) = \frac{1}{\Gamma(s)}\int_0^\infty t^{s}\Tr e^{-tA} \,\frac{d t}{t}.
\end{equation}
On the other hand, by means of pseudodifferential techniques one obtains
\begin{equation*}
\Tr e^{-tA}\sim \sum_{k=0}^\infty a_k t^{(k-\dim\N)/m} \text{ as }t\to 0^+.
\end{equation*}
(see Grubb \cite[Corollary 4.2.7]{Grubb}) where the $a_k$ are numbers. Using  \eqref{GenericZeta} gives, with $\omega\in C_c^\infty(\R)$, $\omega=1$ near $0$, and the notation
\begin{equation*}
r_{K+1}(t)=\Tr e^{-tA} - \sum_{k=0}^K a_k t^{(k-\dim\N)/m}
\end{equation*}
gives
\begin{multline}\label{MellinTTrace}
\Gamma(s)\zeta_A(s)
= \int_0^\infty \omega(t)\sum_{k=0}^K  a_kt^{s+(k-\dim\N)/m} \,\frac{d t}{t}\\+\int_0^\infty t^{s}\omega(t)r_{K+1}(t) \,\frac{d t}{t}
+ \int_0^\infty (1-\omega(t))t^{s}\Tr e^{-tA}\,\frac{d t}{t}
\end{multline}
The first integral is equal to
\begin{equation*}
\sum_{k=0}^K  a_k\widehat\omega(s+(k-\dim\N)/m)
\end{equation*}
with
\begin{equation*}
\widehat\omega(s)=\int_0^\infty t^s\omega(t)\,\frac{dt}{t}.
\end{equation*}
This is a meromorphic function on $\C$ with a simple pole only at $0$ and residue $1$ there. Indeed, using integration by parts one gets
\begin{equation*}
\widehat\omega(s) = \frac{1}{s}\int_0^\infty t^s\omega'(t)\,dt;
\end{equation*}
the function defined by the integral is an entire function of $s$ with value $1$ at $s=0$. The second integral one the right hand side of \eqref{MellinTTrace} is holomorphic in $\Re s> (\dim\N-K-1)/m$, while the third is entire. So $\Gamma(s)\zeta_A(s)$ is meromorphic in $\C$ with simple poles at the points $(\dim\N-k)/m$, $k\in \mathbb N_0$ and residue $a_k$ there. One reads off from this the poles (all simple) and residues of $\zeta_A$ (in particular points of $-\mathbb N_0$ are not poles). For Weyl's asymptotic formula, the presence of the pole at $\dim\N/m$ with residue $a_0/\Gamma(\dim\N/m)$ is the only important information: By the Wiener-Ikehara Tauberian Theorem cited above,
\begin{equation*}
w_A=\frac{m}{\dim \N}\Res_{s=\dim\N/m}(\zeta_A(s)).
\end{equation*}

A by-product of the estimate \eqref{RoughWeyl} (whether the rough estimate or \eqref{WeylFor-iLie_T}) and the estimates \eqref{BoundOnCoefficients} give:
\begin{lemma}\label{FourierExpansion}
Let $\{\phi_j\}_{j\in J}$ be an orthonormal basis of $\ker P$ consisting of eigenvectors of $-\im \Lie_\T$. Then $\psi\in \ker P\cap C^\infty(\N;E)$ if and only if $(\phi,\psi_j)$ is rapidly decreasing in $j$:
\begin{equation*}
\text{for all }N>0 \text{ there is }C_N\text{ such that } |(\phi,\psi_j)|<C_N(1+j)^{-N}\text{ for all }j.
\end{equation*}
Further, if $\psi$ is smooth, then the Fourier series
\begin{equation*}
\psi=\sum_{j\in J}(\psi,\phi_j)\phi_j
\end{equation*}
converges in $C^\infty(\N;E)$.
\end{lemma}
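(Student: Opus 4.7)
The proof proceeds by establishing both implications, both exploiting the rough Weyl bound \eqref{RoughWeyl} to compare indexing by $j$ to indexing by the eigenvalue $\tau_j$, and elliptic regularity for $A=P+(-\im\Lie_\T)^m$ to control the eigensections. Order the basis so that $|\tau_1|\leq|\tau_2|\leq\cdots$; since $N(|\tau_j|+1)\geq j$, \eqref{RoughWeyl} gives $j\leq C(1+|\tau_j|)^\mu$, whence
\[
(1+|\tau_j|)\geq c(1+j)^{1/\mu}.
\]

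For the forward implication, let $\psi\in\ker P\cap C^\infty(\N;E)$. The Hermitian metric $h$ and the $\T$-invariant density $\m$ produce a smooth dual section $\tilde\psi\in C^\infty(\N;E^*\otimes|\Wedge|\N)$ by $\tilde\psi=h(\cdot,\psi)\,\m$, for which $(\phi,\psi)=\langle\phi,\tilde\psi\rangle$ for every $\phi\in L^2(\N;E)$. Applying \eqref{BoundOnCoefficients} to $\tilde\psi$ yields $|(\psi,\phi_j)|\leq C_N(1+|\tau_j|)^{-N}$ for every $N$; substituting the displayed inequality above then converts this to rapid decay in $j$.

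For the backward implication, assume $|(\psi,\phi_j)|\leq C_N(1+j)^{-N}$ for every $N$. The goal is to show that $\sum_j(\psi,\phi_j)\phi_j$ converges absolutely in $C^r(\N;E)$ for every $r$. The key input is a polynomial $C^r$ bound on the eigensections, which follows from elliptic regularity for $A$: since $A\phi_j=\tau_j^m\phi_j$ and $A$ is elliptic of order $m$, iterating the standard a priori estimate
\[
\|\phi_j\|_{H^{s+m}}\leq C_s\big(\|A\phi_j\|_{H^s}+\|\phi_j\|_{L^2}\big)\leq C_s\big(|\tau_j|^m\|\phi_j\|_{H^s}+1\big),
\]
starting from $\|\phi_j\|_{L^2}=1$, gives $\|\phi_j\|_{H^{km}}\leq C_k(1+|\tau_j|)^{km}$, and Sobolev embedding then yields an exponent $M_r$ with $\|\phi_j\|_{C^r}\leq C_r(1+|\tau_j|)^{M_r}$. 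Combining the decay of the coefficients with this bound, for $N$ sufficiently large the series $\sum_j|(\psi,\phi_j)|\,\|\phi_j\|_{C^r}$ is summable by shell-wise grouping against the Weyl count $N(\tau)\leq C\tau^\mu$: in the shell $\{j:2^k\leq|\tau_j|<2^{k+1}\}$ the coefficients are controlled by the decay hypothesis while $\|\phi_j\|_{C^r}\leq C_r 2^{M_r(k+1)}$ and the number of indices is at most $C 2^{\mu(k+1)}$, and choosing $N$ large enough makes the sum over $k$ geometric. Hence the series converges absolutely in $C^r$ and, since $r$ is arbitrary, in $C^\infty(\N;E)$. Because the partial sums also converge to $\psi$ in $L^2$ while the $C^\infty$-limit is in particular continuous, the two limits agree pointwise, so $\psi\in C^\infty(\N;E)$ and the Fourier series converges to $\psi$ in $C^\infty$.

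The principal technical step is the iterated elliptic estimate producing $\|\phi_j\|_{C^r}\leq C_r(1+|\tau_j|)^{M_r}$; with that polynomial bound in hand, the Weyl count \eqref{RoughWeyl}, the coefficient bound \eqref{BoundOnCoefficients}, and the hypothesized rapid decay assemble routinely to give the equivalence and the $C^\infty$-convergence.
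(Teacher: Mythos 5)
Your proof unpacks the paper's one-line justification (``the polynomial relation between the eigenvalues $\tau$ and the dimension of $\mathcal E_\tau$'') into the natural combination of the rough Weyl bound \eqref{RoughWeyl}, the coefficient estimate \eqref{BoundOnCoefficients}, and polynomial bounds on eigenfunctions. You correctly strengthen \eqref{PolynomialPointEigenBounds} from a $C^0$ to a $C^r$ bound by iterating the elliptic estimate for $A$, which is indeed what $C^\infty$-convergence requires. Your forward implication is clean and matches the paper's ingredients.

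The backward implication, however, has a gap at the step ``the coefficients are controlled by the decay hypothesis'' inside a dyadic shell. The hypothesis controls $|(\psi,\phi_j)|$ in terms of $j$, while your $C^r$ bound on $\phi_j$ grows like a power of $|\tau_j|$. The only bridge you build is $j\leq C(1+|\tau_j|)^\mu$ from \eqref{RoughWeyl}. This is a \emph{lower} bound on $|\tau_j|$ in terms of $j$, which is what makes the forward conversion (decay in $\tau_j$ $\Rightarrow$ decay in $j$) work, but it does nothing to prevent $|\tau_j|$ from growing super-polynomially in $j$: the upper Weyl bound $N(\tau)\leq C\tau^\mu$ does not rule out a sparse spectrum. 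In the shell $\{j:2^k\leq|\tau_j|<2^{k+1}\}$ the smallest index is $N(2^k)+1$, and since there is no a priori lower bound on $N(2^k)$, those indices could be small for all $k$, so the $(1+j)^{-N}$ decay buys you nothing there while $\|\phi_j\|_{C^r}\lesssim 2^{M_r k}$ grows. To close this you need either an upper polynomial bound $|\tau_j|\lesssim(1+j)^\nu$ (equivalently a lower Weyl estimate $N(\tau)\gtrsim\tau^\delta$), which \eqref{RoughWeyl} alone does not supply, or you should read the decay hypothesis in terms of $|\tau_j|$ rather than $j$ --- which is, in fact, exactly what \eqref{BoundOnCoefficients} delivers in the forward direction. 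With decay in $|\tau_j|$ the argument also shortens considerably: $\sum_j|\tau_j|^{2mk}|(\psi,\phi_j)|^2<\infty$ for all $k$ places $\psi\in\bigcap_k\Dom(A^k)\subset\bigcap_k H^{mk}(\N;E)=C^\infty(\N;E)$, and the partial sums converge in every $\Dom(A^k)$-norm, hence in $C^\infty$. Be aware that this issue is latent in the paper's terse proof sketch as well and not specific to your write-up, but as stated your shell estimate is not justified.
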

This lemma holds because of the polynomial relation between the eigenvalues $\tau$ and the dimension of $\mathcal E_\tau$ as $\tau\to \infty$. 

\section{Hypoellipticity and spectrum}\label{sSpectrumHypo}

We continue to assume that the conditions of Theorem~\ref{TIsFredholm} are satisfied. 
The ellipticity of $P+(-\im\Lie_\T)^m$ implies that $\Char(P)\subset \set{\sym(-\im\T)\ne 0}$. Define
\begin{equation*}
\Char^{\pm}(P)=\set{\nu \in\Char(P):\sym(-\im\T)(\nu)\gtrless 0}.
\end{equation*}
Define also
\begin{equation*}
\spec_0^\pm(-\im \Lie_\T)=\set{\tau\in \spec_0(-\im \Lie_\T):\tau\gtrless 0}.
\end{equation*}

\begin{theorem}\label{HalfSpectrum}
Suppose that the hypotheses of Theorem~\ref{TIsFredholm} are satisfied and that $P$ is microlocally hypoelliptic at $\Char^+(P)$. Then $-\im\Lie_\T\big|_\Dom$ is semi-bounded from above, that is, $\spec_0^+(-\im\Lie_\T)$ is finite.
\end{theorem}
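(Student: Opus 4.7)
The plan is to argue by contradiction. If $\spec_0^+(-\im\Lie_\T)$ is infinite, extract an $L^2$-orthonormal sequence $\phi_j\in\mathcal E_{\tau_j}$ with $\tau_j>0$ and $\tau_j\to\infty$. Since $A\phi_j=\tau_j^m\phi_j$ and $A=P+(-\im\Lie_\T)^m$ is elliptic of order $m$, each $\phi_j$ is smooth and the elliptic functional calculus for $A$ gives $\|\phi_j\|_{H^s(\N;E)}\asymp\tau_j^s$ for every $s\in\R$. Ellipticity of $A$ also forces $\Char(P)\subset\{\pmb\tau\ne 0\}$, so $\Char(P)=\Char^+(P)\sqcup\Char^-(P)$ and $P$ is elliptic off $\Char(P)$. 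I fix a pseudodifferential partition of unity $\chi_++\chi_-+\chi_0\equiv I$ modulo smoothing with $\chi_\pm\in\Psi^0(\N;E)$ supported in small conic neighborhoods of $\Char^\pm(P)$ contained in $\{\pm\pmb\tau\ge c|\xi|\}$ for some $c>0$, and $\chi_0$ supported off $\Char(P)$.

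Two of the three pieces are easy. A microlocal elliptic parametrix for $P$ on $\mathrm{supp}(\chi_0)$ applied to $P\phi_j=0$ yields $\|\chi_0\phi_j\|_{L^2}\le C_N\|\phi_j\|_{H^{-N}}=O(\tau_j^{-N})$ for every $N$. The microlocal hypoellipticity hypothesis on $\Char^+(P)$ supplies an analogous microlocal left parametrix on $\mathrm{supp}(\chi_+)$, giving $\|\chi_+\phi_j\|_{L^2}=O(\tau_j^{-N})$ in the same way. The residual $I-\chi_+-\chi_--\chi_0$ is smoothing, so its action on $\phi_j$ is likewise $O(\tau_j^{-N})$.

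The remaining piece $\chi_-\phi_j$ is controlled by sharp G\aa{}rding. The symmetric operator $-\chi_-^*(-\im\Lie_\T)\chi_-$ has principal symbol $-\chi_-^2\pmb\tau=|\chi_-|^2(-\pmb\tau)\ge c|\chi_-|^2|\xi|\ge 0$, so sharp G\aa{}rding delivers $\langle(-\im\Lie_\T)\chi_-\phi_j,\chi_-\phi_j\rangle\le C\|\phi_j\|_{L^2}^2=C$. Combining this with the eigenvalue equation $-\im\Lie_\T\phi_j=\tau_j\phi_j$ and the identity $\chi_-(-\im\Lie_\T)=(-\im\Lie_\T)\chi_-+[\chi_-,-\im\Lie_\T]$, where $[\chi_-,-\im\Lie_\T]\in\Psi^0(\N;E)$ is $L^2$-bounded, gives
\begin{equation*}
\tau_j\|\chi_-\phi_j\|^2=\langle(-\im\Lie_\T)\chi_-\phi_j,\chi_-\phi_j\rangle+\langle[\chi_-,-\im\Lie_\T]\phi_j,\chi_-\phi_j\rangle\le C(1+\|\chi_-\phi_j\|),
\end{equation*}
forcing $\|\chi_-\phi_j\|_{L^2}=O(\tau_j^{-1/2})$. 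Adding the three contributions produces $1=\|\phi_j\|_{L^2}=O(\tau_j^{-1/2})$, contradicting $\tau_j\to\infty$.

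The main obstacle is the sharp G\aa{}rding step: arranging the partition so that $\pm\pmb\tau$ is bounded below by $c|\xi|$ on $\mathrm{supp}(\chi_\pm)$, which relies on $\Char^\pm(P)$ being closed conic subsets of $T^*\N\setminus 0$, and checking that sharp G\aa{}rding applies cleanly when $\chi_-$ acts on sections of $E$ despite $-\im\Lie_\T$ having only the scalar principal symbol $\pmb\tau\Id$. Once this is set up, the three estimates combine mechanically, and the argument goes through uniformly in $j$.
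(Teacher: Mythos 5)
Your proposal takes a genuinely different route from the paper, and two of your three microlocal pieces are fine, but the treatment of the $\Char^+(P)$ piece contains a real gap.

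The paper argues as follows. It first reduces (by passing to a subsequence with $\tau_{\ell+1}\geq 2\tau_\ell$) to a lacunary sequence and forms the single distribution $\phi=\sum_\ell\phi_\ell$. From $P\phi=0$ and the microlocal hypoellipticity hypothesis one concludes the purely qualitative statement $\WF(\phi)\subset\Char^-(P)\subset\{\pmb\tau<0\}$. Since $\WF(\phi)$ therefore misses the conormal bundle of every $\T$-orbit, $\phi$ restricts to each orbit, where it becomes the lacunary exponential series $\sum_\ell e^{\im\tau_\ell t}\phi_\ell(p_0)$ with one-sided wavefront. Rapid decay of the Fourier transform of a cutoff of this series at $\tau\to+\infty$, combined with the polynomial pointwise bounds \eqref{PolynomialPointEigenBounds} and the lacunarity of the $\tau_\ell$, yields $\phi_\ell(p_0)\to0$, with uniformity in $p_0$ obtained from the continuity of the restriction map in the $C^{-\infty}_W$ topology (H\"ormander). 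Thus the quantitative information comes from the seminorm structure of $C^{-\infty}_W$, applied once to the single distribution $\phi$, together with the lacunary structure — never from an a priori estimate for $P$ near $\Char^+(P)$.

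Your approach instead works with the individual $\phi_j$ and a pseudodifferential partition of unity. The elliptic piece ($\chi_0$) and the smoothing remainder are handled correctly, the sharp G\r{a}rding argument at $\Char^-(P)$ is a nice and entirely correct trick (since $\chi_-$ can be taken scalar, the symbol $-|\sigma(\chi_-)|^2\pmb\tau\,\Id$ is a nonnegative multiple of the identity and sharp G\r{a}rding for systems applies cleanly), and the Sobolev equivalence $\|\phi_j\|_{H^s}\asymp\tau_j^s$ can indeed be justified — though it deserves a sentence: since $P$ commutes with $P^\star$ and with $\Lie_\T$, $A$ is normal, $\ker P=\ker P^\star$ on smooth sections, hence $\phi_j$ is an eigenfunction of the nonnegative selfadjoint elliptic operator $AA^\star$ of order $2m$ with eigenvalue $\tau_j^{2m}$, and the equivalence follows from the functional calculus for $AA^\star$.

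The gap is in the $\chi_+$ piece. You assert that microlocal hypoellipticity ``supplies an analogous microlocal left parametrix'' yielding $\|\chi_+\phi_j\|_{L^2}\leq C_N\|\phi_j\|_{H^{-N}}$. But ``$P$ is microlocally hypoelliptic at $\Char^+(P)$'' as stated in the theorem is a qualitative wavefront-set assertion: $\nu\notin\WF(Pu)\Rightarrow\nu\notin\WF(u)$. It does not, in general, produce a parametrix, a subelliptic estimate, or any Sobolev-scale gain with controlled loss of derivatives. And since each $\phi_j$ is already $C^\infty$ (being an eigenfunction of the elliptic $A$), the qualitative hypothesis applied to a single $\phi_j$ gives no new information whatsoever — what you need is a uniform quantitative bound in $j$, which is a strictly stronger hypothesis. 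This is precisely the obstruction the paper circumvents by bundling the $\phi_\ell$ into one distribution so that the qualitative wavefront statement can be invoked once and then made quantitative through the topology on $C^{-\infty}_W$ and the lacunary gap condition. Your argument would be perfectly valid, and in fact pleasantly modular, under the stronger hypothesis that $P$ admits a microlocal parametrix (or satisfies a microlocal subelliptic estimate with fixed loss) on $\Char^+(P)$ — which is what actually holds in the CR applications via Boutet de Monvel and Sj\"ostrand — but it does not prove the theorem under the stated assumption.
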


Of course the analogous statement for $\Char^-(P)$ and semi-boundedness from below of $-\im\Lie_\T\big|_\Dom$ also holds.

\medskip
The proof requires some preparation. Let $\p:\R\times\N\to\N$ and $\ss:\R\times \N\to\R$ be the canonical projections, and let $\Lie_{\partial_\ss}$ be the infinitesimal generator of the group of translations of $\p^*E$ in the direction of the fibers of $\p$.  Literally
\begin{equation*}
\p^*E=\set{(\ss,\p;\varphi):\varphi\in E_p}
\end{equation*}
so the meaning of translation in direction of the fibers of $\p$ is clear. Any differential operator $P$ on $C^\infty(\N;E)$ has a canonical lifting as a differential operator $\tilde P$ on $C^\infty(\R\times\N;\p^*E)$, characterized by the properties
\begin{equation*}
\p^*P=\tilde P\p^*,\quad \Lie_{\partial_\ss}\tilde P=\tilde P\Lie_{\partial_\ss},\quad \tilde P\ss =\ss \tilde P,
\end{equation*}
where $\ss$ in the last condition means the operator of multiplication by the real-valued function $\ss$. Define $\tilde \a_t:\R\times\N\to\R\times \N$ by
\begin{equation*}
\tilde \a_t(\ss,p)=(\ss,\a_t(p)).
\end{equation*}
This is the one-parameter group of diffeomorphisms whose infinitesimal generator is the canonical lifting, $\tilde \T$, of $\T$.

Define $\tilde \a_t^*:\p^*E\to\p^*E$ by
\begin{equation*}
\tilde \a_t^*(\ss, p;\varphi)=(\ss,\a_{-t}p;\a_t^*\varphi),
\end{equation*}
a one-parameter group of isomorphisms on $\p^*E$ covering $\tilde\a_{-t}$. Define
\begin{equation*}
\A:C^\infty(\N;E)\to C^\infty(\R\times\N;\p^*E)
\end{equation*}
by
\begin{equation*}
\A \phi(\ss,p)=(\ss,p;\a_\ss^*(\phi(\a_\ss(p)))).
\end{equation*}
This map has an extension to a continuous map $C^{-\infty}(\N;E)\to C^{-\infty}(\R\times\N;\p^*E)$. The formula
\begin{equation*}
\Lie_{\partial_\ss}\A\phi=\A\Lie_\T\phi, \quad\phi\in C^{-\infty}(\N;E)
\end{equation*}
holds since it holds for smooth $\phi$. A direct computation also gives that
\begin{equation*}
\A\a_t^*\phi=\tilde \a_t^*\A\phi,\quad\phi\in C^{-\infty}(\N;E),
\end{equation*}
so $A\Lie_\T\phi=\Lie_{\tilde\T}\A\phi$, which gives
\begin{equation}\label{FreeIdentity}
(\Lie_{\partial_\ss}-\Lie_{\tilde\T})\A\phi=0.
\end{equation}

Let $\Sch(\R\times\N;\p^*E)$ be the space of Schwartz sections of $\p^*E$, that is, the subspace of $C^\infty(\R\times \N;\p^*E)$ whose elements satisfy
\begin{equation*}
\forall k,\ell,m\in \mathbb N_0\, \forall P\, \in \Diff(\N;E)\,\exists C\text{ such that }\|\ss^k\Lie_{\partial_\ss}^\ell \tilde P\phi\|_{L^\infty}\leq C
\end{equation*}
where the norm is computed using the lifting of the Hermitian metric of $E$. Let $|\Wedge|(\R\times\N)$ be the density bundle of $\R\times \N$. Using $|d\ss|\otimes \p^*\m$ to trivialize the density bundle of $\R\times\N$, define
$\Sch'(\R\times \N;\p^*E)$ as the dual of $\Sch(\R\times\N;\p^*E^*)$.
As usual $\Sch(\R\times\N;\p^*E)\embed \Sch'(\R\times\N;\p^*E)$ is continuous with dense image.

If $\phi\in \Sch(\R\times\N;E)$ then
\begin{equation*}
\phi(\ss,p)=(\ss,p;\phi_0(\ss,p))
\end{equation*}
where $\ss\mapsto \phi_0(\ss,p)$ is a Schwartz function on $\R$ with values in $E_p$. Let $\widehat \phi$ be the section of $\p^*E$ given by
\begin{equation*}
\widehat \phi(\tau,p)=(\tau,p;\widehat \phi_0(\tau,p)),\quad \widehat \phi_0(\tau,p)=\int e^{-\im \tau\ss}\phi_0(\ss,p) d\ss.
\end{equation*}
Then $\psi\mapsto\widehat\psi$ is a continuous map $\Sch(\R\times\N;E)\to \Sch(\R\times\N;E)$ and
\begin{equation}\label{DualityRelOfFT}
\langle\widehat \phi,\psi\rangle= \langle\phi,\widehat \psi\rangle,\quad \phi\in \Sch(\R\times\N;\p^*E),\ \psi\in \Sch(\R\times\N;\p^*E^*).
\end{equation}
If $\phi\in \Sch'(\R\times\N;\p^*E)$, then $\widehat\phi$ is defined as usual by the requirement that \eqref{DualityRelOfFT} holds for all $\psi\in \Sch(\R\times\N;\p^*E^*)$.

\begin{lemma}\label{HalfSpectrumLemma}
Let $U^+$ be the interior of the set of points $p$ such that $P$ is microlocally hypoelliptic at $\nu$ for every $\nu\in \Char^+(P)\cap T^*_p\N$. Then every sequence of normalized eigenfunctions $\phi_\ell\in \mathcal E_{\tau_\ell}$ with $\tau_\ell\to\infty$ as $\ell\to\infty$ converges uniformly to zero on any compact subset of $U^+$.
\end{lemma}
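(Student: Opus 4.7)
The plan is to lift each normalized eigenfunction $\phi_\ell$ to $\R\times\N$ via $\A$, cut off in $s$, and exploit the high positive-frequency concentration in the $s$-direction. Pick $\omega\in C_c^\infty(\R)$ with $\omega=1$ near $0$ and set $u_\ell(s,p)=\omega(s)e^{\im\tau_\ell s}\phi_\ell(p)$. By the characterizing properties of the lift, $\tilde P$ differentiates only in the $\N$-variable, so $\tilde P u_\ell=\omega(s)e^{\im\tau_\ell s}P\phi_\ell=0$; the family $\{u_\ell\}$ is uniformly bounded in $L^2(\R\times\N)$; and the partial Fourier transform in $s$, $\widehat{u_\ell}(\tau,p)=\widehat\omega(\tau-\tau_\ell)\phi_\ell(p)$, is Schwartz-concentrated at $\tau=\tau_\ell\to+\infty$.

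First I would transfer hypoellipticity to the lift: a standard partial-operator argument (Fourier in $s$ and apply microlocal hypoellipticity of $P$ in the $\N$-variable at each frequency $\tau$) shows that $\tilde P$ is microlocally hypoelliptic at every $(s_0,p_0;\tau_0,\nu_0)$ with $p_0\in U^+$, $\nu_0\ne 0$, and $\nu_0\notin\Char^-(P)$. The remaining characteristic directions of $\tilde P$ above such $p_0$ are $\nu_0=0$ (with $\tau_0\ne 0$) and $\nu_0\in\Char^-(P)$. Next, I would show the effective microsupport of $u_\ell$ avoids both asymptotically: iterating $(-\im\Lie_\T)^k\phi_\ell=\tau_\ell^k\phi_\ell$ together with a parameter-dependent parametrix for $-\im\Lie_\T-\tau_\ell$ away from its characteristic set $\sym(-\im\T)=\tau_\ell$ shows that the portions of $\phi_\ell$ microsupported where $\sym(-\im\T)\le 0$ or where $|\nu|\ll\tau_\ell$ have $L^2$-norm $O(\tau_\ell^{-\infty})$, while the Schwartz tails of $\widehat\omega(\tau-\tau_\ell)$ handle the $\tau$-direction.

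With these two inputs, I would run the microlocal hypoelliptic estimate for $\tilde P$ on a neighborhood $W\subset\R\times\N$ of $\{0\}\times K$ (with $K\subset U^+$ compact), using a cutoff $\chi$ elliptic on the hypoelliptic cone and dominating $(1-\chi)u_\ell$ via the frequency-concentration bounds. Since $\tilde P u_\ell=0$, this yields a uniform bound $\|u_\ell\|_{H^s(W)}\le C_s$ for every $s\ge 0$. The explicit product form of $u_\ell$ forces $\|u_\ell\|_{H^s_s(W)}\gtrsim\tau_\ell^s\|\phi_\ell\|_{L^2(K)}$ (each $s$-derivative of $e^{\im\tau_\ell s}$ pulls down a factor $\tau_\ell$), so $\|\phi_\ell\|_{L^2(K)}\le C_s\tau_\ell^{-s}$ for every $s$. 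Interpolating this arbitrary $L^2$-decay with the polynomial Sobolev bounds underlying \eqref{PolynomialPointEigenBounds}, and applying Sobolev embedding, upgrades it to the claim $\sup_{p\in K}|\phi_\ell(p)|_{E_p}\to 0$. The chief obstacle will be controlling $(1-\chi)u_\ell$ to order $O(\tau_\ell^{-\infty})$: the non-hypoelliptic directions $\nu=0$ and $\Char^-(P)$ are excluded only asymptotically in $\tau_\ell$, so the cutoff $\chi$ and the hypoelliptic parametrix must be combined carefully to keep the error terms rapidly decaying.
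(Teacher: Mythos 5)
Your approach is a genuinely different route from the paper's. The paper first extracts a lacunary subsequence ($\tau_{\ell+1}\geq 2\tau_\ell$), sums it into a single distribution $\phi=\sum_\ell\phi_\ell$, and then works entirely with the \emph{exact} wavefront set: since $P\phi=0$ in $C^{-\infty}$, $\WF(\phi)\subset\Char(P)$, and microlocal hypoellipticity at $\Char^+(P)$ over $U^+$ removes the positive side, leaving $\WF(\A\phi)\subset W$ with $\sym(-\im\T)<0$. Continuity of the restriction to orbits in H\"ormander's spaces $C^{-\infty}_W$ then gives a \emph{uniform-in-$p$} rapid decay of $(\chi\A\phi)\widehat{\ }(\tau,p)=\sum_\ell\widehat\chi(\tau-\tau_\ell)\phi_\ell(p)$ as $\tau\to+\infty$; extracting the $k$-th coefficient and controlling the off-diagonal sum via \eqref{PolynomialPointEigenBounds} and the lacunarity finishes it. In contrast, you lift each $\phi_\ell$ separately, cut off in $\ss$, and try to run a uniform (semiclassical) microlocal hypoelliptic estimate for the family $u_\ell$ individually. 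This buys you a per-eigenfunction quantitative statement ($\|\phi_\ell\|_{L^2(K)}=O(\tau_\ell^{-\infty})$, stronger than what the paper proves or needs) and dispenses with the subsequence trick; but it puts all the weight on a uniformity/remainder-control step that the paper's "sum into one distribution" device sidesteps completely.

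That step is where the proposal has a real gap, and you flag it yourself as the "chief obstacle." The classical microlocal hypoelliptic estimate for $\tilde P$ gives $\|Bu\|_{H^s}\leq C(\|\Psi\tilde Pu\|_{H^{s'}}+\|u\|_{H^{-N}})$ with a \emph{fixed} constant and a cutoff $B$ elliptic only on a conic neighborhood of the hypoelliptic set; this bounds $\|Bu_\ell\|_{H^s}$, not $\|u_\ell\|_{H^s(W)}$. To upgrade, you must show $(1-\chi)u_\ell$ is $O(\tau_\ell^{-\infty})$ in appropriate norms, i.e., a quantitative semiclassical microsupport estimate at scale $h=1/\tau_\ell$. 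Your sketch of this ("iterating $(-\im\Lie_\T)^k\phi_\ell=\tau_\ell^k\phi_\ell$ and a parametrix away from $\sym(-\im\T)=\tau_\ell$") is plausible in spirit — the relevant set $\{\pmb\tau(\nu)=\tau_\ell\}$ is the semiclassical, not classical, characteristic set of $-\im\Lie_\T-\tau_\ell$ — but it is a nontrivial piece of semiclassical calculus that is not actually carried out, and without it the claimed uniform bound $\|u_\ell\|_{H^s(W)}\leq C_s$ is unjustified. Similarly, the assertion that microlocal hypoellipticity of $P$ lifts to microlocal hypoellipticity of $\tilde P$ at all $(s_0,p_0;\tau_0,\nu_0)$ with $\nu_0\neq 0$, $\nu_0\notin\Char^-(P)$, $p_0\in U^+$ is stated as "a standard partial-operator argument" but is not standard: $\sym(\tilde P)$ is independent of $\tau$, so $\tilde P$ alone gives no regularity in the $\ss$-direction and the claim must be coupled to the $\ss$-frequency localization of the specific $u_\ell$. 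The paper avoids both of these issues by exploiting the second equation $(\Lie_{\partial_\ss}-\Lie_{\tilde\T})\A\phi=0$, which couples the $\tau$- and $\T$-frequencies exactly, and by keeping the entire argument at the level of exact wavefront sets of a fixed distribution. If you want to pursue your route, the missing ingredients are precisely a semiclassical wavefront set calculus for the family $\{\phi_\ell\}$ and a uniform microlocal estimate with $O(\tau_\ell^{-\infty})$ remainders; as written the proposal does not supply them.
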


\begin{proof}[Proof of Theorem~\ref{HalfSpectrum}]
Suppose that $\spec^+_0(-\im \Lie_\T)$ is an infinite set. Pick a sequence $\set{\tau_\ell}_{\ell=1}^\infty$ in $\spec^+_0(-\im \Lie_\T)$ with $\tau_\ell\to\infty$, and for each $\ell$, an element $\phi_\ell\in \mathcal E_{\tau_\ell}$ with $\|\phi_\ell\|=1$. The hypothesis in the theorem is that $U^+=\N$.  Since $\N$ is compact, the lemma gives that $\phi_\ell\to0$ uniformly on $\N$, so $\|\phi_\ell\|\to 0$, which contradicts $\|\phi_\ell\|=1$. Thus $\spec^+_0(-\im \Lie_\T)$ must be a finite set.
\end{proof}

\begin{proof}[Proof of Lemma~\ref{HalfSpectrumLemma}]
Let $K\subset U^+$ be a compact set. We argue that every subsequence of a sequence as in the lemma has a further subsequence that converges uniformly to zero on $K$. To do this, it is enough to show that if
\begin{equation}\label{SparseChoice}
\tau_{\ell+1}\geq 2\tau_\ell\quad \text{and}\quad\|\phi_\ell\|=1,
\end{equation}
then $\phi_\ell\to 0$ uniformly on $K$, since any subsequence of the original sequence has a subsequence satisfying this condition. The normalization condition in \eqref{SparseChoice} gives that the series
\begin{equation*}
\phi=\sum_{\ell=1}^\infty \phi_\ell
\end{equation*}
converges as a distribution. Indeed, from \eqref{BoundOnCoefficients} we get that $\sum_{\ell=1}^\infty \langle\phi_\ell,\psi\rangle$ converges (absolutely) for each $\psi\in C^\infty(\N;E^*\otimes |\Wedge|\N)$.

The essence of the proof is as follows. As a distribution, $\phi$ satisfies $P\phi=0$, so $\WF(\phi)\subset \Char(P)$. Since $P+(-\im\Lie_\T)^m$ is elliptic, $\Char(P)\cap \Char(-\im\Lie_\T)=\emptyset$. Therefore $\WF(\phi)$ is disjoint form the conormal bundle of any orbit $\Orb_{p_0}$ of $\T$, and consequently $\phi$ has a restriction to $\Orb_{p_0}$. Since $-\im\Lie_\T\phi_\ell=\tau_\ell\phi_\ell$,
\begin{equation*}
\a_t^*\phi(\a_t(p_0))=e^{\im\tau_\ell t}\phi_\ell(p_0).
\end{equation*}
By the continuity of the restriction map, the restriction of $\phi$ to $\Orb_{p_0}$ is the distribution
\begin{equation*}
\phi_{p_0}(t)=\sum_{\ell=1}^\infty e^{\im\tau_\ell t}\phi_\ell(p_0)
\end{equation*}
on $\R$. Since $P$ is microlocally hypoelliptic on $\Char^+(P)$, $\WF(\phi_{p_0})$ is contained in $\sym(-\im\partial_t)<0$, so if $\chi\in C^\infty_c(\R)$, then the Fourier transform of $\chi(t)\phi_{p_0}(t)$,
\begin{equation*}
f(\tau,p_0)=\sum \widehat\chi(\tau-\tau_\ell)\phi_\ell(p_0)
\end{equation*}
is rapidly decreasing in $\tau$ as $\tau\to\infty$. This can (and will) be used to prove that $\phi_\ell(p_0)\to 0$ as $\ell\to\infty$. We will show that in fact $\phi_\ell$ tends to $0$ uniformly in a neighborhood of $p_0$ in $U^+$, so by compactness of $K$ and since $p_0$ is arbitrary we will conclude that $\phi_\ell\to0$ uniformly on $K$.

Let $\tilde P$ and $\Lie_{\tilde \T}$ be the operators on sections of $\p^*E$ canonically induced by $P$ and $\Lie_\T$ via $\p$. Since $P$ commutes with $\Lie_\T$, $\tilde P \A = \A P$. This gives the first equation in
\begin{equation*}
\tilde P\A\phi=0,\quad (\Lie_{\partial_\ss}-\Lie_{\tilde\T}) \A\phi = 0,
\end{equation*}
since $P\phi=0$. The second equation is the identity \eqref{FreeIdentity}. These equations and the fact that $P$ is microlocally hypoelliptic in $\sym(-\im \T)>0$ imply
\begin{equation}\label{WFinW}
\WF(\A\phi)\subset \Char(\tilde P)\cap \Char(\Lie_{\partial_\ss}-\Lie_{\tilde \T})\cap \set{\sym(-\im \T)<0}.
\end{equation}
Let $W$ be the set on the right, a closed set. The statement \eqref{WFinW} is that $\A \phi$ belongs to the subspace $C^{-\infty}_W(\R\times\N;\p^*E)$ of elements of $C^{-\infty}(\N;\p^*E)$ whose wavefront set is contained in $W$. This subspace is a complete locally convex topological vector space, part of whose seminorms control the absence of wavefront set outside $W$ (rapid decay of the ``Fourier transform" of $\A\phi$ outside $W$), see H\"ormander \cite{Ho71a}. Let $\iota_p:\p^{-1}(p)\to \R\times \N$ be the inclusion map. The proof that the restriction map $\iota_p^*:C^{-\infty}_W(\R\times \N;\p^*E)\to C^{-\infty}_{\iota^*W}(\p^{-1}(p);\p^*E)$ is continuous involves estimating the seminorms of $C^{-\infty}_{\iota^*W}(\p^{-1}(p);\p^*E)$ expressing rapid decay outside $\iota^*W$ by the analogous seminorms for $C^{-\infty}_W(\R\times \N;\p^*E)$, see H\"ormander, op. cit. These estimates are uniform in $p$ for $p$ in small sets and give that if $\chi\in C^\infty_c(\R)$ then with $\tilde f=(\chi \A\phi)\widehat{\ }$ ($\chi$ thought of as a function of $\ss$),
\begin{equation}\label{UniformWFEstimate}
\display{300pt}{for all $p_0\in\N$ there is a neighborhood $U$ of $p_0$ such that for all $M>0$ there is $C> 0$ such that $\|(\chi \A\phi)\widehat{\ }(\tau,p)\|\leq C(1+\tau)^{-M}$ for $\tau>0$ and $p\in U$.}
\end{equation}
Lemma \eqref{FourierExpansion} implies that the series defining $\phi$ converges as a distribution, and $\A$ is continuous, so $\A\phi=\sum_\ell\A\phi_\ell$. Each $\phi_\ell$ is smooth and a solution of $-\im\Lie_\T\phi_\ell=\tau_\ell\phi_\ell$, so $\A\phi_\ell$ is smooth, equal to the section $(\ss,p)\mapsto (\ss,p;e^{\im\ss\tau_\ell}\phi_\ell(p))$. Thus $(\chi \A\phi)\widehat{\ }$ is the section $(\ss,p)\mapsto (\ss,p;f(\tau,p))$ of $\p^*E$ with
\begin{equation*}
f(\tau,p)=\sum \widehat\chi(\tau-\tau_\ell)\phi_\ell(p)
\end{equation*}
and we conclude that this function is indeed rapidly decreasing as $\tau\to\infty$ uniformly for $p$ in a neighborhood $U$ of $p_0$.

Suppose that $\int\chi(t)dt=1$, \ie, $\widehat\chi(0)=1$. Then, for each $k\in \mathbb N$,
\begin{equation}\label{Bound1}
\phi_k(p)=f(\tau_k,p)-\sum_{\ell\ne k} \widehat\chi(\tau_k-\tau_\ell)\phi_\ell(p).
\end{equation}
The fact that $f(\tau_k,0)$ tends to zero rapidly for $p\in U$ as $k\to\infty$ was established above. Using \eqref{PolynomialPointEigenBounds} and that $\widehat\chi$ is a rapidly decreasing function we bound the series as
\begin{align*}
\|\sum_{\ell\ne k}\widehat\chi(\tau_k-\tau_\ell)\phi_\ell(p)\|
&\leq
C\sum_{\ell\ne k}(1+|\tau_k-\tau_\ell|)^{-N}(1+\tau_\ell)^\mu\\
&\leq
C(1+\tau_k)^\mu \sum_{\ell\ne k}(1+|\tau_k-\tau_\ell|)^{-N+\mu}
\end{align*}
with arbitrary $N$. Using the integral test and the condition on the $\tau_\ell$ in \eqref{SparseChoice} one obtains the bound
\begin{equation*}
\sum_{\ell\ne k}(1+|\tau_k-\tau_\ell|)^{-N+\mu}\leq C(\tau_{k-1}^{-N+\mu+1}+\tau_k^{-N+\mu+1}),
\end{equation*}
so the norm (as an element of $E_p$) of the series in \eqref{Bound1} is rapidly decreasing, uniformly for $p\in U$. Fix $N>\mu+1$. We conclude that if \eqref{UniformWFEstimate} holds in $U$ with with $M=1$, then
\begin{equation*}
\|\phi_k(p)\|\leq C(1+\tau_k)^{-1} \text{ for all }p\in U.
\end{equation*}
The compactness of $K$ gives that the same conclusion is valid for all $p\in \N$ (with some other constant). This implies that the pointwise norm of the $\phi_k$ tends to $0$ uniformly on $K$ as $k\to\infty$.
\end{proof}

\section{CR manifolds with $\R$-action}\label{sCRmanifolds}

Let $\N$ be a CR manifold, write $\Kbar$ for its CR structure (as complex tangent vectors of type $(0,1)$), let $\Hor\subset T\N$ be the subbundle whose complexification is $\K\oplus \Kbar$ and write $J:\Hor\to\Hor$ for the almost complex structure of $\Hor$.

Assume $\T$ (as usual, a nowhere vanishing real vector field) is such that $d\a_t$ maps $\Kbar$ to itself, that is, $\a_t$ acts by CR diffeomorphisms. Equivalently, $[\X,\T]$ is a smooth vector field in $\Kbar$ whenever $\X$ is. It follows that
\begin{equation}\label{Veebar}
\Veebar=\Kbar+\Span_\C\T
\end{equation}
is an involutive subbundle of $\C T\N$. Since $\Vee+\Veebar=\C T\N$, a theorem of Nirenberg in \cite{Ni57} extending the Newlander-Nirenberg Theorem, see \cite{NeNi57}, implies that $\N$ is locally integrable, that is to say, locally embeddable or realizable. (More general results of this nature were obtained by  Baouendi-Rothschild \cite{BR87}, Baouendi-Rothschild-Treves \cite{BRT85}, and Jacobowitz \cite{Jacob87}.)

If $\theta$ is the $1$-form that vanishes on $\K\oplus\Kbar$ and satisfies $\langle\theta,\T\rangle=1$, then $\T$ is a Reeb vector field with respect to $\theta$ and the latter is, if $\Kbar$ is non-degenerate, a pseudohermitian structure on $\N$, see Webster \cite{Webster}.

We assume in addition that there is a Riemannian metric $g$ on $\N$ which is $\T$-invariant: $\Lie_\T g=0$. Then $g(\T,\T)$ is constant on integral curves of $\T$, so we may normalize $g$ so as to also have that $g(\T,\T)=1$. The restriction of $g$ to $\Hor$ is also $d\a_t$-invariant. Redefine $g$ so that $\Hor$ is orthogonal to $\T$, finally, replace $g$ on $\Hor$ by the metric
\begin{equation*}
(u,v)\mapsto \frac{1}{2}\big(g(u,v)+g(Ju,Jv)\big),\quad u,v\in T_p\N,\ p\in \N
\end{equation*}
Since $d\a_t J=Jd\a_t$, the new metric is again $\T$-invariant in addition to Hermitian. So it gives a $\T$-invariant Hermitian metric on $\C T\M$ making $\K$, $\Kbar$ and $\Span_\C\T$ orthogonal to each other. Conversely, a $\T$-invariant Hermitian metric on $\Kbar$ can be used to construct a $\T$-invariant Riemannian metric with respect to which these subbundles are orthogonal to each other. 

\medskip
We regard $\Veebar$, defined in \eqref{Veebar} as the primary object together with  a class $\pmb \beta$ of sections of $\Veebar$ to be defined momentarily. Because $\Veebar$ is involutive, there is a complex
\begin{equation}\label{bdyComplex}
\cdots
\to C^\infty(\N;\Wedge^q \smash[t]{\Veebar}^*)\xrightarrow{\Deebar}
C^\infty(\N;\Wedge^{q+1}\smash[t]{\Veebar}^*)
\to\cdots,
\end{equation}
where $\Deebar$ is defined using Cartan's formula for the standard differential, see Helgason \cite{He}. Namely, if $\eta\in C^\infty(\N;\Wedge^q \smash[t]{\Veebar}^*)$ and $V_0,\dotsc,V_q$ are smooth sections of $\Veebar$, then
\begin{multline*}
(q+1)\Deebar \eta(V_0,\dotsc,V_q) = \sum_j (-1)^jV_j \eta(V_0,\dotsc,\hat V_j,\dotsc,V_q)\\+
\sum_{j<k}(-1)^{j+k}\eta([V_j,V_k],V_1,\dotsc,\hat V_j,\dotsc,\hat V_k,\dotsc,V_q).
\end{multline*}
The complex \eqref{bdyComplex} is elliptic because $\Vee+\Veebar=\C T\N$ see Treves \cite{Tr92}. For a function $f$ we have $\Deebar f=\iota^*df$, where $\iota^*:\C T^*\N\to \smash[t]{\Veebar}^*$ is the dual of the inclusion homomorphism $\iota:\Veebar\to\C T\N$. 

The form $\beta=-\im \theta\big|_{\Veebar}$ is an element of $C^\infty(\N;{\Veebar}^*)$ and 
\begin{gather*}
2\Deebar \beta(\X,\Y)=-\im\X\theta(\Y)+\im\Y\theta(X)+\im \theta([\X,\Y])=0,\\ 2\Deebar \beta(\X,\T)=-\im \X\theta(\T)+\T\theta(X)+\im\theta([X,\T])=0
\end{gather*}
if $\X$, $\Y$ are sections of $\Veebar$, so $\Deebar\beta=0$. We let
\begin{equation*}
\pmb \beta=\set{\beta+\Dee u:u\in C^\infty(\N,\R),\ \T u=0}
\end{equation*}
Each element $\beta'\in \pmb\beta$ is $\Deebar$-closed and $\langle \beta,\T\rangle =-\im$, therefore 
\begin{equation*}
\Kbar_{\beta'}=\ker\beta'
\end{equation*}
is again CR structure. Since $\Lie_\T\beta'=0$, these CR structures are all $\T$-invariant.

The meaning $\Veebar$ together with the class $\pmb \beta$ is illustrated in Section~\ref{sLineBundles}, see \eqref{VeebarInCircleBundle} and the end of that section. For an interpretation of the condition $\Deebar\beta=0$ in a familiar situation see Lemma~\ref{HolomorphicConnection}.

\medskip
Fix an element in $\pmb\beta$. In terms of basic properties there is no distinction between any of the elements  of $\pmb\beta$, so we continue to denote our choice by $\beta$, and by $\Kbar$ the CR structure it defines. The operators of the CR complex
\begin{equation}\label{dbarbComplex}
\cdots \to C^\infty(\N;\Wedge^q\Kbar^*)\xrightarrow{\deebarb} C^\infty(\N;\Wedge^{q+1}\Kbar^*)\to\cdots
\end{equation}
can be written in terms of those of the complex \eqref{bdyComplex}:
\begin{equation}\label{FormulaForDeeOnK*}
\deebarb\phi=\Deebar\phi-\im \beta\wedge \Lie_\T\phi,\quad\phi\in C^\infty(\N;\Wedge^q\Kbar^*).
\end{equation}
Here $\Lie_\T$ means regular Lie derivative. The $\T$ invariance of $\beta$ (hence of $\Kbar$) also gives that $\deebarb$ commutes with $\Lie_\T$. If $h$ is a Hermitian metric on $\Kbar$ which is $\T$-invariant and $\m$ a positive $\T$-invariant density (for instance the Riemannian density), then also the formal adjoint, $\deebarb^\star$, of $\deebarb$, is $\T$ invariant (since $-\im\Lie_\T$ is symmetric), hence the Hodge Laplacians of the $\deebarb$-complex,
\begin{equation*}
\Laplacian_{b,q} = \deebarb\deebarb^\star + \deebarb^\star\deebarb
\end{equation*}
are also $\T$-invariant. 

We are now ready to apply the results of Section~\ref{sInvariantOperators}. Let
\begin{equation*}
\Ha^q_{\deebarb}(\N)=\ker\Laplacian_{b,q}=\set{\phi\in L^2(\N;\Wedge^q\Kbar^*):\Laplacian_{b,q}\phi=0}
\end{equation*}
and let
\begin{equation*}
\Dom_q=\set{\phi\in \Ha^q_{\deebarb}(\N)\text{ and }\Lie_\T\phi\in \Ha^q_{\deebarb}(\N)}.
\end{equation*}
The spaces $\Ha^q_{\deebarb}(\N)$ may be infinite-dimensional. If $\phi\in \Ha^q_{\deebarb}(\N)$, the condition $\Lie_\T\phi\in \Ha^q_{\deebarb}(\N)$ is equivalent to the condition
\begin{equation*}
\Lie_\T\phi\in L^2(\N;\Wedge^q\Kbar^*).
\end{equation*}
Since $\Laplacian_{b,q}-\Lie_\T^2$ is elliptic and symmetric, \eqref{DoubleRayCondition} is satisfied for any real line $\Lambda\subset \C$ whose only real point is $0$. Theorem~\ref{TIsFredholm} gives:

\begin{theorem}\label{CRDecomposition}
Suppose that there a $\T$-invariant Hermitian metric $h$ on $\Kbar$, let $\Laplacian_{b,q}$ be the Laplacian of the complex \eqref{dbarbComplex} computed using the metric $h$ and a $\T$-invariant density on $\N$. Then
\begin{equation}\label{LieOnB-Harmonic}
-\im \Lie_\T\big|_{\Dom_q}:\Dom_q\subset \Ha^q_{\deebarb}(\N)\to \Ha^q_{\deebarb}(\N)
\end{equation}
is a selfadjoint Fredholm operator with compact resolvent.
\end{theorem}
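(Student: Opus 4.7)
The plan is to apply Theorem~\ref{TIsFredholm} directly, with $P = \Laplacian_{b,q}$, $m = 2$, and $E = \Wedge^q\Kbar^*$; the conclusion of that theorem is then exactly the content of the present theorem, so the proof reduces to checking the four hypotheses.

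Three of these hypotheses have already been recorded in the text leading to the statement: $\Laplacian_{b,q}$ commutes with its formal adjoint because it is formally selfadjoint; $\Laplacian_{b,q}$ commutes with $\Lie_\T$ because $\deebarb$ and $\deebarb^\star$ both do (the first from $\T$-invariance of $\beta$, the second because $-\im\Lie_\T$ is symmetric in the $\T$-invariant $L^2$ inner product); and the $\T$-invariance of the induced Hermitian form on $\Wedge^q\Kbar^*$ and of the density are built into the standing hypothesis of the section. The only substantive task is therefore the verification of \eqref{DoubleRayCondition}.

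I would argue that $A := \Laplacian_{b,q} + (-\im\Lie_\T)^2 = \Laplacian_{b,q} - \Lie_\T^2$ is elliptic, with selfadjoint and non-negative principal symbol; this immediately yields \eqref{DoubleRayCondition} for $\Lambda = \im\R$ (or any line through $0$ meeting $\R$ only at $0$), since $\sym(A)(\xi) - \lambda\Id$ is then invertible for every $\lambda \in \C\setminus[0,\infty)$ and every $\xi\neq 0$. For the ellipticity, at a nonzero real covector $\xi \in T^*_p\N$ one computes $\sym(\Laplacian_{b,q})(\xi) = |\xi|_{\Kbar}|^2\Id$ on $\Wedge^q\Kbar^*_p$ (via the standard identity $\sigma\sigma^* + \sigma^*\sigma = |\sigma|^2\Id$ applied to $\sigma = \sym(\deebarb)(\xi)$), so if it vanishes then $\xi|_{\Kbar} = 0$; by reality of $\xi$ this forces $\xi|_{\Hor} = 0$, and combined with $\pmb\tau(\xi) = \langle\xi,\T\rangle = 0$ it yields $\xi = 0$ since $T\N = \Hor \oplus \R\T$. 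Non-negativity of $\sym(A)(\xi)$ is clear, as it is the sum of a non-negative Hermitian operator and the non-negative scalar $\pmb\tau(\xi)^2$.

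The only step requiring any real care is the symbol computation for $\deebarb$, but this is routine given the identity \eqref{FormulaForDeeOnK*} together with the already-noted ellipticity of the complex \eqref{bdyComplex}, which itself rests on $\Vee + \Veebar = \C T\N$. With the four hypotheses in hand, Theorem~\ref{TIsFredholm} delivers \eqref{LieOnB-Harmonic} as a selfadjoint Fredholm operator with compact resolvent, completing the proof.
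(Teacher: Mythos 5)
Your proof is correct and takes essentially the same route as the paper: the paper likewise reduces Theorem~\ref{CRDecomposition} to an application of Theorem~\ref{TIsFredholm} with $P=\Laplacian_{b,q}$, $m=2$, noting in the paragraph preceding the statement that $\Laplacian_{b,q}-\Lie_\T^2$ is elliptic and symmetric so that \eqref{DoubleRayCondition} holds for any real line through $0$ meeting $\R$ only at $0$. You have merely spelled out the ellipticity computation (the symbol identity $\sym(\Laplacian_{b,q})(\xi)=|\pi\xi|^2\Id$ plus the decomposition $T\N=\Hor\oplus\R\T$) that the paper states without elaboration.
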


\begin{definition}
Let $\spec^q_0(-\im \Lie_\T)$ be the spectrum of the operator \eqref{LieOnB-Harmonic}, and let $\Ha^q_{\deebarb,\tau}(\N)$ be the eigenspace of $-\im\Lie_\T$ in $\Ha^q_{\deebarb}(\N)$ corresponding to the eigenvalue $\tau$.
\end{definition}


\section{Vanishing theorems}\label{sVanishing}

We continue in this section with the notation and assumptions of the previous section and give an application of Theorem~\ref{HalfSpectrum} when the CR structure $\Kbar$ is non-degenerate. Let $\Char \Kbar$ be the characteristic set of $\Kbar$ and let
\begin{equation*}
\Char^\pm\Kbar=\set{\nu\in\Char \Kbar:\pmb \tau(\nu)\gtrless 0}
\end{equation*}
where $\pmb \tau=\sym(-\im\T)$. Let $\theta$ be the real $1$-form on $\N$ which vanishes on $\Kbar$ and satisfies $\langle\theta,\T\rangle =1$; thus $\theta$ is smooth, spans $\Char \Kbar$, and has values in $\Char^+(\Kbar)$. Recall that
\begin{equation*}
\Levi_{\theta}(v,w)=-\im d\theta(v,\overline w), \quad v,\ w\in \K_p,\ p\in \N.
\end{equation*}
Suppose that $\Levi_{\theta}$ is non-degenerate, with $k$ positive and $n-k$ negative eigenvalues. It is well known that then $\Laplacian_{b,q}$ is microlocally hypoelliptic at $\nu\in\Char\K$ for all $q$ except if $q=k$ and $\pmb \tau(\nu)<0$ or if $q=n-k$ and $\pmb \tau(\nu)>0$, see \cite{BdM,Sj}, also the appendix of \cite{Me3}. In the definition of the Levi form above we switched to from $\Kbar$ to $\K$ to adapt to the usual conventions.

Applying Theorem~\ref{HalfSpectrum} we get:

\begin{theorem}\label{WeakVanishing}
With the hypotheses of Theorem~\ref{CRDecomposition}, suppose that $\Levi_{\theta}$ is non-degenerate with $k$ positive and $n-k$ negative eigenvalues. Then
\begin{enumerate}
\item $\spec_0^q(-\im \Lie_\T)$ is finite if $q\ne k,\ n-k$;
\item $\spec_0^k(-\im\Lie_\T)$ contains only finitely many positive elements, and \item $\spec_0^{n-k}(-\im\Lie_\T)$ contains only finitely many negative elements.
\end{enumerate}
\end{theorem}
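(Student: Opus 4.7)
The plan is to invoke Theorem~\ref{HalfSpectrum} (together with its $\Char^-$ analogue) three times, taking $P=\Laplacian_{b,q}$ and $m=2$ in each case. The hypotheses of Theorem~\ref{TIsFredholm} are already verified in the discussion preceding Theorem~\ref{CRDecomposition}, so the entire task is to feed in the correct microlocal hypoellipticity information at $\Char^+(\Laplacian_{b,q})$ or $\Char^-(\Laplacian_{b,q})$ and read off the conclusions.

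First I would make the identification $\Char(\Laplacian_{b,q})\subset \Char \K$: the CR complex \eqref{dbarbComplex} is elliptic off $\Char \K$, so $\sym(\Laplacian_{b,q})(\nu)$ is positive definite, and in particular invertible, for every $\nu\notin \Char \K$. Consequently $\Char^{\pm}(\Laplacian_{b,q})\subset \Char^{\pm}\K$ in the notation of Section~\ref{sSpectrumHypo}, and microlocal hypoellipticity of $\Laplacian_{b,q}$ at every point of $\Char^{\pm}\K$ implies microlocal hypoellipticity at every point of $\Char^{\pm}(\Laplacian_{b,q})$.

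Now I split into cases according to $q$. If $q\ne k,n-k$, the results of \cite{BdM,Sj} cited above the theorem give that $\Laplacian_{b,q}$ is microlocally hypoelliptic at every $\nu\in \Char \K$, hence in particular on both $\Char^+(\Laplacian_{b,q})$ and $\Char^-(\Laplacian_{b,q})$. Two applications of Theorem~\ref{HalfSpectrum} (in its $\Char^+$ and $\Char^-$ forms) then give that $\spec^{q,+}_0(-\im\Lie_\T)$ and $\spec^{q,-}_0(-\im\Lie_\T)$ are both finite, and since $\spec^q_0(-\im\Lie_\T)$ is discrete without finite accumulation by Theorem~\ref{CRDecomposition}, its union with $\{0\}$ is finite, proving (1). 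If $q=k$, hypoellipticity fails only where $\pmb\tau<0$, so $\Laplacian_{b,k}$ is microlocally hypoelliptic on $\Char^+(\Laplacian_{b,k})$; Theorem~\ref{HalfSpectrum} then yields (2). The case $q=n-k$ is symmetric: hypoellipticity holds on $\Char^-(\Laplacian_{b,n-k})$, and the $\Char^-$ analogue of Theorem~\ref{HalfSpectrum} (noted immediately after its statement) gives (3).

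The main obstacle is nothing internal to the argument, which is essentially bookkeeping, but rather the sign-convention matching between the Levi form $\Levi_\theta(v,w)=-\im d\theta(v,\overline w)$ used here, the sign of $\pmb\tau=\sym(-\im\T)$, and the conventions of \cite{BdM,Sj} from which the hypoellipticity statement is imported. One has to verify that, with the form $\theta$ chosen to point into $\Char^+\K$ and the Levi form having $k$ positive and $n-k$ negative eigenvalues, the exceptional microlocal directions for $\Laplacian_{b,q}$ are indeed $\{q=k,\pmb\tau<0\}$ and $\{q=n-k,\pmb\tau>0\}$ as stated, and not their opposites; once that is settled, the three cases above are immediate.
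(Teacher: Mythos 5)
Your proposal is correct and is essentially the same argument the paper gives (the paper's proof is just the sentence preceding the theorem, citing the known microlocal hypoellipticity of $\Laplacian_{b,q}$ from \cite{BdM,Sj} and then saying ``Applying Theorem~\ref{HalfSpectrum} we get''). You have simply spelled out the bookkeeping — the inclusion $\Char^\pm(\Laplacian_{b,q})\subset\Char^\pm\Kbar$, the case split on $q$, and the sign-convention caveat — all of which is sound.
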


For the interpretation of this result in the light of Kodaira's vanishing theorem, see Section~\ref{sLineBundles}, in particular \eqref{SpectrumAndKodairaVanishing}. Theorem~\ref{WeakVanishing} applied to the case where $\N$ is the circle bundle of a Hermitian holomorphic line bundle $E\to\B$ over a compact manifold is a partial version of various theorems on vanishing of the $\deebar$-cohomology with coefficients in $E$, see for instance Kobayashi \cite[Chapter III, \S 3]{Ko87} for a listing of such theorems.

One can make a stronger statement when $q=0$. The condition $\zeta\in \Ha^0_{\deebarb}(\N)$ just means that $\deebarb\zeta=0$. For such $\zeta$, if $-\im \T\zeta=\tau\zeta$, then $\zeta$ is smooth and for each $\ell\in \mathbb N$, $\zeta^\ell \in \Ha^0_{\deebarb}(\N)$ satisfies $-\im\T\zeta^\ell=\ell\tau\zeta^\ell$. So if for instance $\spec_0^0(-\im \Lie_\T)\cap \R_+$ is a finite set, then in fact $\spec_0^0(-\im \Lie_\T)$ contains no positive elements. In particular, with the hypothesis of the theorem, if $k$ and $n-k$ are different from $0$, then $\spec_0^0(-\im \Lie_\T)=\set{0}$ rather than just finite.

\section{Circle bundles of holomorphic line bundles}\label{sLineBundles}

We will now discuss circle bundles of holomorphic line bundles in the context of the preceding sections.

\medskip
Let $\B$ be a manifold, let $E\to\B$ be a complex line bundle and fix a Hermitian metric on $E$. Let $\rho:SE\to\B$ be the circle bundle. For $m\in \Z$ define the tensor product bundles $E^m\to\B$ in the usual way, give each of these line bundles the Hermitian metric induced by that of $E$ and let $SE^m\to\B$ be the circle bundle.

Define $\wp_m:SE\to SE^m$ for $m \ne 0$ as follows. Let $p\in SE$. If $m>0$, let $\wp_m(p)=p\otimes\dotsm\otimes p$ ($m$ times). If $m<0$, let $p^*\in SE^*$ be the element dual to $p$, and let $\wp_m(p)=p^*\otimes \dotsm \otimes p^*$ ($|m|$ times). The map $\wp_m:SE\to SE^m$ is an $|m|$-sheeted covering map with the property that $\wp_m(e^{\im t}p)=e^{\im m t}\wp_m(p)$.

Let $x\in \B$. A point $\eta\in E^m_x$ is a linear function $\eta:E^{-m}_x\to\C$ which as such gives the function $f_\eta=\eta\circ \wp_{-m}:SE_x\to\C$. The latter function has the property that if $p\in SE_x$, then $f_\eta(e^{\im t}p)=e^{-\im m t}f_\eta(p)$. Thus if $\T$ is the infinitesimal generator of the action of $S^1$ on $SE$, the function $f_\eta$ on $SE_x$ satisfies the equation
\begin{equation}\label{FiberOfEm}
\T f+\im m f=0.
\end{equation}
Conversely, if $f:E_x\to\C$ solves this equation, then $f$ is the pullback to $SE_x$ by $\wp_{-m}$ of a unique function $\eta_f:SE^{-m}_x\to\C$ that satisfies $\eta_f(e^{\im t}p')=e^{\im t}\eta_f(p')$, $p'\in SE^{-m}_x$, and that therefore extends as a linear map $\eta_f:E^{-m}_x\to\C$ thus giving an element of $E_x^m$. The correspondence $f\mapsto \eta_f$ is the inverse of $\eta\mapsto f_\eta\circ\wp_{-m}$: the fiber $E^m_x$ is isomorphic, as a vector space, to the space of solutions of \eqref{FiberOfEm} on $SE_x$.

More generally, if $x\in \B$ and $\eta\in \Wedge^q_x\B \otimes E^m_x$, then $\langle\wp_{-m}(p),\eta\rangle\in T_x\B$ for each $p\in SE_x$ and $\rho^*_p\langle\wp_{-m}(p),\eta\rangle$ is an element of $\Wedge^q_p SE$. There is a canonical identification of $\rho^*T^*\B$ and the kernel, $\Hor^*$, of $\inner_\T:\Wedge^q SE \to \Wedge^{q-1}SE$ (interior multiplication by $\T$), and the map
\begin{equation*}
SE_x \ni p\mapsto
F_m(\eta)(p) = \rho^*_p\langle\wp_{-m}(p),\eta\rangle \in \Wedge^q_p\Hor^*
\end{equation*}
is a section $\phi_\eta$ of $\Wedge^q\Hor^*$ along $SE_x$.
Since
\begin{equation}\label{BundlePeriodicity}
\begin{aligned}
F_m(\eta)(\a_t p)
&= e^{-\im m t}\rho_{\a_t p}^*\langle\wp_{-m}(p),\eta\rangle\\
&= e^{-\im m t} \a_{-t}^*\rho^*_p\langle\wp_{-m}(p),\eta\rangle\\
&= e^{-\im m t} \a_{-t}^*(F_m(\eta)(p)),
\end{aligned}
\end{equation}
$\a_t^* (F_m(\eta)(\a_tp))=e^{-\im m t}F_m(p)$, so $\phi=F_m(\eta)$ satisfies
\begin{equation}\label{FiberOfEmOtimesWedge}
\Lie_\T \phi +\im m \phi=0.
\end{equation}
Conversely, for any section $\phi$ of $\Wedge^q\Hor^*$ along $SE_x$ that satisfies \eqref{FiberOfEmOtimesWedge} there is $\eta\in \Wedge^q_x\B\otimes E^m_x$ such that $\phi=F_m(\eta)$. Applying this to sections of $\Wedge^q\B\otimes E^m$ we get an injective map
\begin{equation}\label{wpStar}
F_m:C^\infty(\B;\Wedge^q\B\otimes E^m)\to C^\infty(SE;\Wedge^q\Hor^*)
\end{equation}
whose range is the subspace of $C^\infty(SE;\Wedge^q\Hor^*)$ whose elements satisfy \eqref{FiberOfEmOtimesWedge} globally. The case $m=0$ is included in the above scheme by defining $F_0=\rho^*$. We give $E^0=\B\times \C$ the canonical Hermitian structure.

Suppose that $\nabla$ is a Hermitian connection on $E$. Thinking of $SE$ as the bundle of unit bases of $E$, the connection gives a horizontal bundle $\Hor_\theta\subset T SE$ and a connection form $\theta$; $\theta$ vanishes on $\Hor_\theta$, $\langle\theta,\T\rangle=1$, and $\Lie_\T\theta=0$. Via the splitting $T SE=\Hor_\theta \oplus \Span \T$, the dual of $\Hor_\theta$ is identified with $\Hor^*$, and $\Wedge^q\Hor_\theta^*$ is identified with the kernel $\Wedge^q\Hor^*$ of $\inner_\T:\Wedge ^qSE\to\Wedge^{q-1}SE$.

\medskip
Suppose now that $\B$ is a complex manifold and that $\pi:E\to \B$ is a Hermitian holomorphic line bundle. Let $\nabla$ be the Hermitian holomorphic connection, view $\N=SE$, the circle bundle of $E$ with respect to the metric as the unit frame bundle,  let $\theta$ be the connection form of $\nabla$. Let $\iota:\Veebar\embed \C T\N$ be the subbundle of $\C T\N$ given by
\begin{equation}\label{VeebarInCircleBundle}
\Veebar=\set{v\in \C T\N:\pi_*v\in T^{0,1}\B}
\end{equation}
and let $\beta=-\im\iota^*\theta$. Denote the operators of the associated differential complex by $\Deebar$, as usual. The kernel $\Kbar_\beta\subset \Veebar$ of $\beta$ is $\T$-invariant, equal to $\Veebar\cap\ker \theta\subset \C\Hor_\theta$; its fibers project isomorphically onto the fibers of $T^{0,1}\B$. The kernel, $\Kbar^*$, of $\inner_\T$ in $\smash[t]{\Veebar}^*$ is canonically isomorphic to $\rho^*\Wedge^{0,1}\B$ and to the dual of $\Kbar_\beta$.

\begin{lemma}\label{HolomorphicConnection}
The section $\beta$ is $\Deebar$-closed.
\end{lemma}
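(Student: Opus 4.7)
The strategy is to exploit the fact that $\theta$ is the connection form of the Chern connection on a holomorphic line bundle, whose curvature is of pure type $(1,1)$.

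First I would replace the intrinsic computation of $\Deebar\beta$ by one on all of $\N$ using the natural extension. The form $\beta=-\im\iota^*\theta$ arises from the globally defined $1$-form $-\im\theta$ on $\N$, and the Cartan-formula definition of $\Deebar$ together with $\iota:\Veebar\hookrightarrow\C T\N$ has the property that for any smooth extension $\tilde\beta$ of $\beta$ to $\N$, $\Deebar\beta$ agrees (up to the combinatorial factor in the paper's definition) with $\iota^*(d\tilde\beta)$ restricted to $\Wedge^2\Veebar$. So it suffices to verify that $d(-\im\theta)$ annihilates all pairs of sections of $\Veebar$.

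Next I would invoke the structure equation for the principal $U(1)$-connection on the unit frame bundle $\N=SE$: the curvature of $\nabla$ is a $\C$-valued $2$-form $\Omega$ on $\B$ and $d\theta=\rho^*\Omega$ (there is no quadratic term since the structure group is abelian). Because $\nabla$ is the Hermitian holomorphic (Chern) connection on a holomorphic line bundle, its curvature is of pure type $(1,1)$: $\Omega\in C^\infty(\B;\Wedge^{1,1}\B)$. This is the key geometric input and I expect the only nontrivial step; everything else is just unravelling definitions. If needed one proves it by writing $\nabla$ locally as $\nabla=d+\partial\log h$ in a holomorphic frame, so that $\Omega=-\deebar\dee\log h$ is manifestly of type $(1,1)$.

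Finally I would combine the two: for sections $V_0,V_1$ of $\Veebar$ we have, by the very definition of $\Veebar$ in \eqref{VeebarInCircleBundle}, $\rho_*V_j\in T^{0,1}\B$ (possibly zero, if $V_j$ is a multiple of $\T$). Then
\begin{equation*}
d(-\im\theta)(V_0,V_1)=-\im\,\rho^*\Omega(V_0,V_1)=-\im\,\Omega(\rho_*V_0,\rho_*V_1)=0,
\end{equation*}
the last equality because a $(1,1)$-form vanishes on pairs of $(0,1)$-vectors (and trivially if one argument is zero). Hence $\iota^*d(-\im\theta)=0$ on $\Wedge^2\Veebar$, i.e.\ $\Deebar\beta=0$. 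No calculation of brackets or Lie derivatives on $\N$ is required; everything is reduced to the type of the Chern curvature downstairs on $\B$.
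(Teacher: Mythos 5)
Your proof is correct and rests on exactly the same geometric input as the paper's: the structure equation $d\theta=\rho^*\Omega$ for the $U(1)$ connection (no quadratic term since the structure group is abelian), and the fact that the Chern curvature has no $(0,2)$ component, so that $\Omega$ annihilates a pair of $(0,1)$-vectors downstairs. The only real difference is organizational. The paper unwinds the Cartan formula explicitly for $\T$-invariant sections $V,W$ of $\Kbar_\beta$ to arrive at $-2\im\,d\theta(V,W)=-2\rho^*\Omega(V,W)$, and then treats the pairing with $\T$ as a separate (trivial) case. You instead observe up front that, because $\Veebar$ is involutive, $\Deebar\beta$ coincides (up to the paper's normalization) with $\iota^*d(-\im\theta)$ on $\Wedge^2\Veebar$, which lets you handle all sections of $\Veebar$ uniformly: if $\rho_*V_j\in T^{0,1}\B$ for $j=0,1$ then $\Omega(\rho_*V_0,\rho_*V_1)=0$, and if either pushforward vanishes (e.g.\ $V_j$ a multiple of $\T$) the conclusion is immediate. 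This is slightly cleaner and avoids having to appeal to a basis of $\T$-invariant frames, but it is not a different proof in substance; both arguments are driven by the same two facts about the Chern connection on the circle bundle.
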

\begin{proof}
Suppose $V$ and $W$ are smooth $\T$-invariant vector fields in $\Kbar_\beta$. Then
\begin{multline*}
2\Deebar\beta(V,W)=V\langle\beta,W\rangle-W\langle\beta,V\rangle-\langle\beta,[V,W]\rangle
\\=-\im V\langle\theta,W\rangle + \im W\langle\theta,V\rangle + \im\langle\theta,[V,W]\rangle
= -2\im d\theta(V,W) = -2\rho^*\Omega(V,W)
\end{multline*}
where $\Omega$ is the curvature form of the connection. Since the latter is a holomorphic connection, its $(0,2)$ component vanishes. Thus, since at each point $V$ and $W$ are liftings of elements of $T^{0,1}\B$, $\rho^*\Omega(V,W)=0$. Also
\begin{equation*}
2\Deebar\beta(V,\T)=V\langle\beta,\T\rangle-\T\langle\beta,V\rangle-\langle\beta,[V,\T]\rangle=0.
\end{equation*}
Thus $\Deebar\beta=0$.
\end{proof}

Evidently, the map \eqref{wpStar} restricts to an isomorphism from $C^\infty(\B;\Wedge^{0,q}\B\otimes E^m)$ onto
\begin{equation*}
C_m^\infty(SE;\Wedge^q\Kbar^*)=\set{\phi\in C^\infty(SE;\Wedge^q\Kbar^*):\Lie_\T\phi+\im m\phi=0}.
\end{equation*}

\begin{lemma}
The map
\begin{equation*}
F_m:C^\infty(\B;\Wedge^{0,q}\B\otimes E^m)\to C_m^\infty(SE;\Wedge^q\Kbar^*)
\end{equation*}
is an isomorphism, and
\begin{equation}\label{DeebarDbarHomotopy}
\Dbar(\im m)F_m=F_m\deebar
\end{equation}
where $\Dbar(\sigma)\phi= \Deebar\phi +\im \sigma \beta\wedge \phi$.
\end{lemma}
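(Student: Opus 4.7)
The plan is to prove the isomorphism first and then reduce the intertwining identity to the equality $\deebarb F_m=F_m\deebar$, which is the classical compatibility between the $\deebar$-complex coupled to $E^m$ on $\B$ and the CR complex on the unit circle bundle of a Hermitian holomorphic line bundle.

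For the isomorphism, the pointwise discussion leading to \eqref{wpStar} already identifies $\Wedge^q_x\B\otimes E^m_x$ with the space of sections of $\Wedge^q\Hor^*$ over $SE_x$ satisfying \eqref{FiberOfEmOtimesWedge}. When $\eta\in\Wedge^{0,q}_x\B\otimes E^m_x$, the covector $\langle\wp_{-m}(p),\eta\rangle$ lies in $\Wedge^{0,q}_x\B$, and the canonical identification $\Wedge^q\Kbar^*\cong\rho^*\Wedge^{0,q}\B$ noted above Lemma~\ref{HolomorphicConnection} places $F_m\eta$ in $C^\infty(SE;\Wedge^q\Kbar^*)$. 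The equivariance \eqref{BundlePeriodicity} puts the image in $C_m^\infty(SE;\Wedge^q\Kbar^*)$; injectivity is immediate. For surjectivity, given $\phi\in C_m^\infty(SE;\Wedge^q\Kbar^*)$ and any $p\in SE_x$, the value $\phi(p)$ corresponds under the identification to a unique $\omega\in\Wedge^{0,q}_x\B$, and the element $\eta(x)\in\Wedge^{0,q}_x\B\otimes E^m_x$ characterized by $\langle\wp_{-m}(p),\eta(x)\rangle=\omega$ is independent of $p$ by $m$-equivariance and depends smoothly on $x$ because $\phi$ does.

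For the intertwining identity, apply \eqref{FormulaForDeeOnK*} to $F_m\eta$ and substitute $\Lie_\T F_m\eta=-\im m\,F_m\eta$ from \eqref{FiberOfEmOtimesWedge}:
\begin{equation*}
\deebarb F_m\eta=\Deebar F_m\eta-\im\beta\wedge(-\im m)F_m\eta=\Deebar F_m\eta-m\beta\wedge F_m\eta=\Dbar(\im m)(F_m\eta).
\end{equation*}
It therefore suffices to check $\deebarb F_m\eta=F_m\deebar\eta$, which I would verify in a local trivialization. Pick a holomorphic frame $s$ of $E$ over $U\subset\B$, set $\psi=\log|s|^2$, and trivialize $SE|_U\cong U\times S^1$ via the unit frame $\tilde s=e^{-\psi/2}s$ with angular coordinate $t$. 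A direct computation yields $\theta=dt+\rho^*\alpha$ with $\alpha^{0,1}=\tfrac{\im}{2}\deebar\psi$. Writing $\eta=\tilde\eta\otimes\tilde s^{\otimes m}$, one has $F_m\eta=e^{-\im mt}\rho^*\tilde\eta$ and, after passing to the holomorphic frame to compute $\deebar\eta$, $F_m\deebar\eta=e^{-\im mt}\rho^*\bigl(\deebar\tilde\eta-\tfrac{m}{2}\deebar\psi\wedge\tilde\eta\bigr)$. On the other side, since the bracket of horizontal lifts of $(0,1)$-vector fields is itself horizontal---the vanishing of the $(0,2)$-part of the curvature already invoked in Lemma~\ref{HolomorphicConnection}---we get $\deebarb(\rho^*\tilde\eta)=\rho^*\deebar\tilde\eta$, while $\deebarb(e^{-\im mt})=\im m\,e^{-\im mt}\rho^*\alpha^{0,1}=-\tfrac{m}{2}e^{-\im mt}\rho^*\deebar\psi$; the Leibniz rule then matches the two sides exactly.

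The main obstacle is the local computation just sketched, specifically the correct identification of $\alpha^{0,1}$ in terms of $\deebar\psi$ and the use of the Hermitian holomorphic hypothesis (vanishing of the $(0,2)$-curvature) to guarantee $\deebarb(\rho^*\tilde\eta)=\rho^*\deebar\tilde\eta$. Once these two ingredients are in place, the identity reduces to a short Leibniz-rule calculation.
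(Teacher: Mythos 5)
Your proposal is correct and arrives at the same computation the paper makes, but you organize the logic in the opposite order of the two lemmas. The paper proves $\Dbar(\im m)F_m=F_m\deebar$ by a direct local computation in a hypoanalytic chart (compute $\Deebar F_m(\eta)$ and subtract $m\beta\wedge F_m(\eta)$, using $\beta=-\im\Deebar t-\rho^*\omega^{0,1}$), and then reads off $\deebarb F_m=F_m\deebar$ as an immediate corollary via \eqref{FormulaForDeeOnK*} and \eqref{FiberOfEmOtimesWedge}. You run this backward: you observe, using \eqref{FormulaForDeeOnK*} together with $\Lie_\T F_m\eta=-\im mF_m\eta$, that $\deebarb F_m\eta=\Dbar(\im m)F_m\eta$, so \eqref{DeebarDbarHomotopy} is equivalent to $\deebarb F_m=F_m\deebar$, and you then verify the latter in a local unit-frame trivialization. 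There is no circularity in either direction, since \eqref{FormulaForDeeOnK*} and \eqref{FiberOfEmOtimesWedge} are established independently of both lemmas. Your local computation checks out: with $\tilde s=e^{-\psi/2}s$ one indeed has $\alpha^{0,1}=\tfrac{\im}{2}\deebar\psi$ (equivalently $\omega^{0,1}=-\tfrac12\deebar\psi$ for the unit-frame connection form $\omega$), $\deebarb(e^{-\im mt})=-\tfrac{m}{2}e^{-\im mt}\rho^*\deebar\psi$, and $\deebarb(\rho^*\tilde\eta)=\rho^*\deebar\tilde\eta$ because $\Kbar_\beta$ is involutive (Lemma~\ref{HolomorphicConnection}); the Leibniz rule matches $F_m(\deebar\eta)=e^{-\im mt}\rho^*(\deebar\tilde\eta-\tfrac{m}{2}\deebar\psi\wedge\tilde\eta)$. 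One modest advantage of your organization is that you spell out the isomorphism assertion (which the paper dismisses with "Evidently") and you make explicit where the vanishing of the $(0,2)$-curvature is used; the paper's version, computing $\Deebar$ directly in the hypoanalytic chart, avoids the detour through $\deebarb$ and the horizontality of brackets, so it is marginally more self-contained.
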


\begin{proof}
We prove \eqref{DeebarDbarHomotopy}. Let $\gamma$ be a smooth local section of $SE$ defined near a point $x_0\in \B$ and let $\omega$ be the connection form  with respect to $\gamma$ of the Hermitian holomorphic connection of $E$. Then $\gamma_m=\wp_m\circ \gamma$ is a section of $SE^m$, and $m\omega$ is the connection form with respect to $\gamma_m$ of the Hermitian holomorphic connection of $E^m$. If $\eta=\phi\otimes \gamma_m$ is a smooth section of $\Wedge^{0,q}\B \otimes E^m$ near $x_0$, then
\begin{equation*}
\deebar\eta= ( m\omega^{0,1}\wedge\phi+\deebar\phi) \otimes \gamma_m
\end{equation*}
where $\omega^{0,1}$ is the $(0,1)$ component of $\omega$, and $F_m(\eta)(\gamma(z))=d\rho^*_{\gamma(z)}\phi$.

Let $t$ be defined in an neighborhood of $p_0=\gamma(x_0)$ in $SE$ so that $t$ vanishes on the image of $\gamma$ and $\T t=1$. Then $\theta=dt-\im \rho^*\omega$ and
\begin{equation*}
\beta=-\im \Deebar t-\rho^*\omega^{0,1}.
\end{equation*}
Suppose that $z^1,\dotsc,z^n$ are holomorphic coordinates for $\B$ on $U$. Their pullback to $\rho^{-1}(U)$ will also be denoted $z^1,\dotsc,z^n$. Then $(z^1,\dotsc,z^n,t)$ is a hypoanalytic chart of $\Veebar$ near $p_0$ (see Treves \cite{Tr92}), and $\a_t \gamma(z) = e^{\im t}\gamma(z)$ is the point with coordinates $(z,t)$. In these coordinates, if $\phi=\sum_{I}\phi_I d\overline z^I$, then $\rho^*\phi=\sum_{I}\phi_I\Deebar \overline z^I$. Using \eqref{BundlePeriodicity} we have
\begin{equation*}
F_m(\eta)(\a_t\gamma(z)) = e^{-\im m t}\sum_{I}\phi_I \Deebar\overline z^I
\end{equation*}
so
\begin{align*}
\Deebar F_m(\eta)(\a_t \gamma(z))
&=e^{-\im m t} \big(\sum_{I}\Deebar \phi_I\wedge\Deebar\overline z^I - \im m \Deebar t\wedge \sum_{I}\phi_I \Deebar\overline z^I\big)\\
&=e^{-\im m t}\big(\sum_{I}\Deebar \phi_I\wedge\Deebar\overline z^I+ m\rho^*\omega^{0,1}\wedge \sum_{I}\phi_I \Deebar\overline z^I\big) \\
&\quad + e^{-\im m t}\big(- m\rho^*\omega^{0,1}\wedge \sum_{I}\phi_I \Deebar\overline z^I-\im m\Deebar t\wedge \sum_{I}\phi_I \Deebar\overline z^I \big)\\
&=e^{-\im m t}\big(\rho^*(\deebar\phi_0+m\omega^{0,1}\wedge\eta)+m(-\im \Deebar t-\rho^*\omega^{0,1})\wedge \rho^*\eta\big)\\
&=F_m(\deebar\eta)(\a_t \gamma(z))+m\beta\wedge F_m(\eta)(\a_t \gamma(z)).
\end{align*}
Thus $\Deebar F_m(\eta) - m\beta\wedge F_m(\eta) = F_m(\deebar \eta)$.
\end{proof}

The vector bundle $\Kbar_\beta$ is a CR structure on $SE$. Using the identification of $\Kbar_\beta^*$ and $\Kbar^*$ indicated above, the $\deebarb$-operators of this CR structure are given by \eqref{FormulaForDeeOnK*}. Since $\Lie_\T\deebarb=\deebarb\Lie_\T$, there is a complex
\begin{equation}\label{PeriodicDeebarb}
\cdots\to C_m^\infty(SE;\Wedge^q\Kbar^*) \xrightarrow{\deebarb}C_m^\infty(SE;\Wedge^{q+1}\Kbar^*)\to\cdots
\end{equation}
for each $m\in \Z$.

\begin{lemma}
The maps $F_m$ satisfy
\begin{equation*}
\deebarb F_m=F_m\deebar.
\end{equation*}
Hence, the $\deebar$ cohomology groups of $E^m$ are isomorphic to the cohomology groups of the complex \eqref{PeriodicDeebarb}.
\end{lemma}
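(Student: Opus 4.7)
The plan is to obtain the intertwining relation as an immediate consequence of the previous lemma by evaluating $\Dbar(\im m)$ on the range of $F_m$. First I would recall that every $\phi=F_m(\eta)$ satisfies the eigenvector condition $\Lie_\T\phi=-\im m\phi$ coming from \eqref{FiberOfEmOtimesWedge}. Substituting this into the formula \eqref{FormulaForDeeOnK*}, namely $\deebarb\phi=\Deebar\phi-\im\beta\wedge\Lie_\T\phi$, converts the Lie-derivative term into the purely algebraic operator $-m\beta\wedge(\cdot)$. Thus
\begin{equation*}
\deebarb F_m(\eta)=\Deebar F_m(\eta)+\im(\im m)\beta\wedge F_m(\eta)=\Dbar(\im m)F_m(\eta),
\end{equation*}
and the identity \eqref{DeebarDbarHomotopy} from the previous lemma gives $\deebarb F_m(\eta)=F_m(\deebar\eta)$.

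The cohomology statement is then a formal consequence. The previous lemma already records that $F_m$ is a bijection from $C^\infty(\B;\Wedge^{0,q}\B\otimes E^m)$ onto $C_m^\infty(SE;\Wedge^q\Kbar^*)$ in every degree $q$, and the intertwining just established says $F_m$ is a cochain map. An invertible cochain map induces isomorphisms on cohomology in all degrees, so the $\deebar$-cohomology of $E^m$ on $\B$ is identified with the cohomology of \eqref{PeriodicDeebarb}.

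There is essentially no obstacle here: all the analytic content was absorbed into the previous lemma's local-coordinate computation of $\Deebar F_m(\eta)$. The only points worth a line of verification are that the complex \eqref{PeriodicDeebarb} is well-defined, i.e., $\deebarb$ preserves $C_m^\infty(SE;\Wedge^q\Kbar^*)$ (which follows either from the noted commutation $\Lie_\T\deebarb=\deebarb\Lie_\T$, or directly from the intertwining itself, since the right-hand side $F_m(\deebar\eta)$ automatically lies in $C_m^\infty$), and that the inverse of $F_m$ preserves smoothness, which is clear because $F_m$ is defined pointwise by a smooth algebraic prescription.
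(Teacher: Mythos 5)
Your argument is correct and matches the paper's own proof: both substitute the eigenvector relation $\Lie_\T F_m(\eta)=-\im m\,F_m(\eta)$ from \eqref{FiberOfEmOtimesWedge} into \eqref{FormulaForDeeOnK*} to reduce $\deebarb$ to the algebraic operator $\Dbar(\im m)$ on the range of $F_m$, and then invoke \eqref{DeebarDbarHomotopy}. The added remarks on $F_m$ being a bijective cochain map and on $\deebarb$ preserving $C^\infty_m$ are accurate and harmless, just making explicit what the paper leaves implicit.
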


Indeed, suppose $\eta\in C^\infty(\B;\Wedge^{0,q}\B\otimes E^m)$. Then \eqref{FiberOfEmOtimesWedge} holds for $\phi=F_m(\eta)$. With this we get
\begin{equation*}
\deebarb F_m(\eta) = \Deebar F_m(\eta)-\im\beta\wedge \Lie_\T F_m(\eta) = \Deebar F_m(\eta) - m\beta\wedge F_m(\eta)=F_m(\deebar\eta).
\end{equation*}
where the last equality is \eqref{DeebarDbarHomotopy}.

\medskip
Fix a Hermitian metric $g$ on $\Wedge^{0,1}\B$ and denote also by $g$ the Riemannian metric it induces on $\B$ as well as those induced on each of the exterior powers $\Wedge^{0,q}\B$. Let $\m$ be the Riemannian density determined by $g$. Let $h$ be the Hermitian metric of $E$ and $h_m$ the one induced on $E^m$. So $h_m(\wp_m(p),\wp_m(p))=h(p,p)=1$ if $p\in SE$. These metrics give Hermitian metrics on the vector bundles $\Wedge^{0,q}\B\otimes E^m$, which we again denote by $h$:
\begin{equation*}
h(\phi\otimes \eta,\psi\otimes \eta)=g(\phi,\psi)\quad \text{if } h(\eta,\eta)=1.
\end{equation*}
Let $\Laplacian^{(m)}$ denote the Hodge Laplacians for the Dolbeault complex with coefficients in $E^m$.

Using $g$ and the pointwise isomorphisms $\rho_p^*:\Wedge^{0,1}_{\rho(p)}\B\to \Wedge^q_p\Kbar^*$ we get Hermitian metrics on the vector bundles $\Wedge^q\Kbar^*$, to be denoted again by $h$. Using this Hermitian metric and the density $\m_0 =\rho^*\m\otimes \theta$ on $SE$ we then get Kohn Laplacians $\Laplacian_b$ for the $\deebarb$ complex on $SE$. It follows from the definitions that
\begin{equation*}
\T h(\phi,\psi)=h(\Lie_\T\phi,\psi)+h(\phi,\Lie_\T\psi),
\end{equation*}
if $\phi$, $\psi\in C^\infty(SE;\Wedge^q\Kbar^*)$. This gives that $-\im \Lie_\T$ is formally selfadjoint,
\begin{equation*}
\int_{SE}h(-\im\Lie_\T\phi,\psi)\m_0 = \int_{SE}h(\phi,-\im\Lie_\T\psi)\m_0
\end{equation*}
and that the spaces $C_m^\infty(SE;\Wedge^q\Kbar^*)$, $m\in\Z$, are pairwise orthogonal. Also, since $\Lie_\T$ commutes with $\deebarb$, $\Lie_\T$ commutes with $\Laplacian_b$.

The formula
\begin{equation*}
h((F_m\phi)(p),(F_m\psi)(p))=h(\phi(\rho(p)),\psi(\rho(p)))\quad \text{if }\phi,\ \psi\in C^\infty(\B;\Wedge^{0,q}\B\otimes E^m)
\end{equation*}
for any $p\in SE$ gives
\begin{equation*}
\int_{SE}h(F_m\phi,F_m\psi)\m_0=2\pi \int_\B h(\phi,\psi)\m \quad \text{if }\phi,\ \psi\in C^\infty(\B;\Wedge^{0,q}\B\otimes E^m).
\end{equation*}
Now, if $\phi\in C^\infty(\B;\Wedge^{0,q}\B\otimes E^m)$ and $\psi\in C^\infty(\B;\Wedge^{0,q+1}\B\otimes E^m)$, then
\begin{equation*}
(\deebarb F_m \phi,F_m\psi)=(F_m(\deebar \phi),F_m\psi) = 2\pi(\deebar\phi,\psi)=(F_m\phi,F_m(\deebar^\star\psi))
\end{equation*}
and also $(\deebarb F_m \phi,F_m\psi) = ( F_m \phi,\deebarb^\star F_m\psi)$, so
\begin{equation*}
\deebarb^\star F_m = F_m\deebar^\star
\end{equation*}
Consequently $\Laplacian_bF_m=F_m\Laplacian_m$, and thus the kernel of $\Laplacian^{(m)}$ in degree $q$ is mapped by $F_m$ into the subspace $\mathcal E^q_m$ of the kernel of $\Laplacian_b$ in degree $q$ whose elements satisfy \eqref{FiberOfEmOtimesWedge}. Let $\ker \Laplacian_b$ be the kernel of $\Laplacian_b$ in $L^2$. Let
\begin{equation*}
\Dom=\set{\phi\in \ker\Laplacian_b:\Lie_\T\phi\in L^2}.
\end{equation*}
Theorem~\ref{TIsFredholm} gives here that
\begin{equation}\label{MiniSAT}
-\im \Lie_\T\big|_\Dom:\Dom\subset \ker\Laplacian_b\to \ker\Laplacian_b
\end{equation}
is a selfadjoint Fredholm operator with eigenspaces consisting of smooth sections. In the situation at hand, the numbers $\tau$ for which
\begin{equation*}
\Laplacian_b \phi=0, \quad-\im \Lie_\T\phi=\tau\phi
\end{equation*}
has a nontrivial solution must be integers. This gives, for each $q$, an isomorphism between the eigenspace of \eqref{MiniSAT} corresponding to the eigenvalue $-m$ and the kernel of $\Laplacian^{(m)}$ on $(0,q)$ forms.
\begin{equation}\label{SpectrumAndKodairaVanishing}
\display{300pt}{Theorems on vanishing of cohomology in degree $(0,q)$ for the Dolbeault complex associated with $E^m$ are thus theorems on absence of the point $-m$ from the spectrum of the operator \eqref{MiniSAT} in degree $q$.}
\end{equation}
For example, if $E$ is a positive line bundle, then by Kodaira's Vanishing Theorem, $-m\notin\spec(-\im\Lie_\T)$ for every $q<n$ and $m\geq 1$. Positivity (or ampleness) of $E$ means that for \emph{some} Hermitian metric, $SE$ is strictly pseudo{\em concave} (Grauert), which in turn implies microlocal hypoellipticity of $\Laplacian_b$ on one component or the other of its characteristic set, depending on the degree. Theorem~\ref{HalfSpectrum} gives a sufficient condition in terms of a hypoellipticity condition of $\Laplacian_b$ in order for the spectrum of \eqref{MiniSAT} to contain only finitely many points in one of the components of $\R\minus 0$. The simplest sufficient condition for hypoellipticity of $\Laplacian_b$ is the nondegeneracy of the Levi form of the CR structure $\K$. In that case, Theorem~\ref{HalfSpectrum} gives our Theorem~\ref{WeakVanishing}.

\medskip
We now discuss the class $\pmb \beta$. The forms $\beta$, $\beta'$ on the circle bundles $\rho:SE\to\B$ and $\rho':S'E\to\B$ of $E$ with respect to two Hermitian metrics $h$ and $h'$ are related as follows. Let $u$ be the function $\B\to\R$ such that $h'=e^{2u} h$. Let $\theta$, $\theta'$ be the connection forms of the respective Hermitian holomorphic connections as forms on the respective circle bundles. Finally, let $F:SE\to S'E$ be the map
\begin{equation*}
F(\sigma)=e^{-u}\sigma.
\end{equation*}
Then \begin{equation}\label{RelationBetweenConnections}
F^*\theta'=\theta-\im(\dee u -\deebar u).
\end{equation}
The map $F$ is not a CR map, but since $\rho'\circ F=\rho$, its differential maps the structure bundle $\Veebar$ of $SE$ to the structure bundle $\smash[t]{\Veebar}'$ of $S'E$, and \eqref{RelationBetweenConnections} gives
\begin{equation}\label{RelationBetweenBetas}
F^*\beta'=\beta + \Deebar u.
\end{equation}
More explicitly, let $\sigma$ be a holomorphic frame of $E$ over some open set $U\subset \B$. Let $\sigma_0=\sigma/|\sigma|$. The pull-back of the connection form $\theta$ to $U\times S^1$ by the diffeomorphism $\Phi=U\times S^1 \to \rho^{-1}(U)$ given by $\Phi(x,e^{\im t})= e^{\im t}\sigma_0(x)$ is
\begin{equation*}
\Phi^*\theta = dt -\im \frac{\dee |\sigma|^2 - \deebar|\sigma|^2}{2|\sigma|^2}
\end{equation*}
where $|\sigma|=\sqrt{h(\sigma,\sigma)}$. Similarly, with the Hermitian holomorphic connection determined by $h'$ and the analogously defined map $\Phi':U\times S^1\to {\rho'}^{-1}(U)$ we get
\begin{equation*}
{\Phi'}^*\theta' = dt -\im \frac{\dee (e^{2u}|\sigma|^2) - \deebar (e^{2u}|\sigma|^2)}{2e^{2u}|\sigma|^2}
=dt -\im \bigg(\frac{\dee |\sigma|^2 - \deebar|\sigma|^2}{2|\sigma|^2} + \dee u- \deebar u\bigg).
\end{equation*}
This proves \eqref{RelationBetweenConnections} since $\Phi'=F\circ \Phi$, and restricting both sides of \eqref{RelationBetweenConnections} to $\Vee$ and multiplying by $-\im$ gives \eqref{RelationBetweenBetas}. Thus the class $\pmb \beta$ includes the forms $\beta$ defined by any Hermitian metric on $E$.

\end{document}